     \newtheorem{lemma}{\bf Lemma}[section]
     \newtheorem{theorem}{\bf Theorem}[section]
     \newtheorem{definition}{\bf Definition}[section]
     \newtheorem{remark}{\bf Remark}[section]
     \numberwithin{equation}{section}
\begin{document}
	
\title{{\LARGE Effective Boundary Conditions for the Fisher-KPP Equation on a Domain with 3-dimensional Optimally Aligned Coating }
\footnotetext{E-mail addresses: gengxingri@u.nus.edu.\\ }}

\author{{Xingri Geng$^{a, b}$}\\[2mm]
	\small\it $^a$ Department of Mathematics, Southern University of Science and Technology, Shenzhen, P.R. China\\
	\small\it $^b$ Department of Mathematics, National University of Singapore,  Singapore 
}

\date{}
\maketitle

\begin{abstract}

We consider the Fisher-KPP equation on a three-dimensional domain surrounded by a thin layer whose diffusion rates are drastically different from that in the bulk. The bulk is isotropic, while the layer is considered to be anisotropic and  ``optimally aligned", where the normal direction is always an eigenvector of the diffusion tensor. To see the effect of the layer, we derive effective boundary conditions (EBCs) by the limiting solution of the Fisher-KPP equation as the thickness of the layer shrinks to zero. These EBCs contain some exotic boundary conditions including the Dirichlet-to-Neumann mapping and the Fractional Laplacian. Moreover, we
emphasize that each EBC keeps effective indefinitely, even as time approaches infinity.

\end{abstract}

\medskip

\noindent{\bf Keywords.} thin layer, asymptotic behavior, eigenvalue problem, effective boundary conditions, lifespan.

\medskip

\noindent{\bf AMS subject classifications.} 35K05, 35B40, 35B45,74K35.

\section{Introduction}

Motivated by the scenario of a nature reserve surrounded by a road/buffer zone. The diffusion rate is isotropic in the nature reserve, while it is anisotropic and drastically different in the road/buffer zone. Moreover, the road/buffer zone is thin compared to the scale of the nature reserve, resulting in multi-scales in the spatial variable. The difference in diffusion rate and spatial size leads to computational difficulty. To deal with such situations, we view the road/buffer zone as a thickless surface as its thickness shrinks to zero, on which ``effective boundary conditions" (EBCs) are imposed. These EBCs not only provide an alternative way for numerical computation but also give us an analytic interpretation of the effects of the road/buffer zone.

In this paper, we consider the Fisher-KPP equation on a 3-dimensional domain surrounded by a thin coating (see Figure \ref{fig}), which is anisotropic and ``optimally aligned" (a notion proposed in \cite{RW}) in the sense that any vector normal to the interface is always an eigenvector of the diffusion tensor. When the tangent diffusion rates in the coating are not larger than $o(1/\delta)$, EBCs and their lifespan were studied by Li, Wang, and Wu \cite{LWW2014}. This paper treats the case when the coating is ``optimally aligned" with arbitrary diffusion rates in the tangent direction, leading to the emergence of exotic EBCs. Moreover, we also show that the maximal time interval (called \textit{lifespan}) of each EBC during which the EBC remains effective is infinite.

\begin{center}
\begin{tikzpicture}
\def\angle{60}%
\pgfmathsetlengthmacro{\xoff}{2cm*cos(\angle)}%
\pgfmathsetlengthmacro{\yoff}{1cm*sin(\angle)}%
\draw (0,0) circle[x radius=3.5cm, y radius=2cm] ++(1.7*\xoff,1.7*\yoff) node{$\Omega_2$} ++(1.5*\xoff,-0.2*\yoff)  node{$\partial\Omega$};
\draw (0,0) circle[x radius=3cm, y radius=1.5cm] node{$\Omega_1$}
++(0*\xoff,-3*\yoff) node{Figure 1: $\Omega=\overline{\Omega}_1\cup\Omega_2$. $ \Omega_2$ is a thin layer with thickness $\delta$.} ++(2.5*\xoff,4*\yoff)  node{$\Gamma$};

\fill (0,1.5) circle [radius = 1pt];
\draw ++(0*\xoff,1.5*\yoff) node{p};

\draw[<-] (3,0)--(3.25,0) node{$\delta$};
\draw[->] (3.3,0)--(3.5,0);
\draw[->] (0,1.5)--(0,1.8);
\draw ++(-0.2*\xoff,2*\yoff) node{\textbf{n}};
\end{tikzpicture}\label{fig}
\end{center}

Let us introduce our mathematical model as follows: let the domain $\Omega_1$ be surrounded by the coating $\Omega_2$ with uniform thickness $\delta > 0$; let the domain $\Omega=\overline{\Omega}_1\cup\Omega_2\subset \mathbb{R}^3$ as shown in Figure \ref{fig}.  For any finite $T>0$,  consider the initial boundary value problem with the Dirichlet boundary condition
\begin{equation} \label{fkpp: PDE}
    \left\{
             \begin{array}{llr}
             u_t-\nabla \cdot (A(x)\nabla u)=f(u), &\mbox{$(x,t)\in Q_T,$}\\
             u=0,     &\mbox{$(x,t)\in S_T,$}&  \\
             u=u_0, &\mbox{$(x,t)\in\Omega\times\{0\},$}   
             \end{array}
  \right.
\end{equation}
where $Q_T:=\Omega\times(0,T)$,   $S_T:=\partial\Omega\times(0,T)$, $0 \leq u_0 \in L^\infty(\Omega)$, and $f(u) = u(1-u)$. Assume that the diffusion tensor $A(x)$ is given by
\begin{equation}\label{A}
    A(x)=\left\{
        \begin{aligned}
              &kI_{3\times3}, &x\in \Omega_1,\\
              &\left(a_{ij}(x)\right)_{3\times3}, & x\in\Omega_2,
        \end{aligned}
    \right.
\end{equation}
where $k$ is a positive constant independent of $\delta> 0$ and the positive-definite matrix $ (a_{ij} (x))$ is anisotropic and ``optimally aligned'' in the coating $\Omega_2$, and further satisfies
\begin{equation}\label{OAC}
     A(x)\textbf{n}(p)=\sigma \textbf{n}(p)  \; \text{ and } \; A(x)\textbf{s}(p)=\mu \textbf{s}(p), \quad \forall x\in \Omega_2, 
\end{equation}
where $\Omega_2$ is thin enough and $\Gamma$ is smooth enough such that the projection $p$ of $x$ onto $\Gamma$ is unique; $\textbf{n}(p)$ is the unit outer normal vector of $\Gamma$ at $p$; $\textbf{s}(p)$ is an arbitrary unit tangent vector of $\Gamma$ at $p$; $\sigma$ and $\mu$ are called the normal conductivity and the tangent conductivity, respectively.

\smallskip

One main purpose of this paper is to address EBCs on $\Gamma \times (0, T)$ as the thickness of the layer shrinks to zero. 
\begin{theorem}\label{fkpp: thm1}
Suppose that $A(x)$ is given in \eqref{A} and \eqref{OAC}. Let $0 \le u_0 \in L^\infty(\Omega)$ and $f(u) = u(1 - u)$. Assume further that $\sigma$ and $\mu$ satisfy
\begin{equation*}
    \lim_{\delta\to 0}\sigma\mu=\gamma\in[0,\infty], \quad
\lim_{\delta\to 0}\frac{\sigma}{\delta}=\alpha\in[0,\infty].
\end{equation*}
Let $u$ be the weak solution of (\ref{fkpp: PDE}), 
then as $\delta \to 0$, $u\to v$ weakly in $W_2^{1,0}(\Omega_1 \times (0,T))$ and strongly in $C([0,T];L^2(\Omega_1))$, where $v$ is the weak solution of 
\begin{equation} \label{fkpp: EPDE}
    \left\{
             \begin{array}{ll}
             v_t- k\Delta v=f(v), & (x,t)\in \Omega_1\times(0,T),  \\  
             v = u_0, & (x, t) \in \Omega_1\times \{0\},
             \end{array}
\right.
\end{equation}
subject to the effective boundary conditions on $\Gamma\times (0,T)$ listed in Table \ref{fkpp: tb}.
\end{theorem}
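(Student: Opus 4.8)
\medskip
\noindent\textbf{Proof proposal.} The plan is a compactness-and-identification scheme in which the nonlinearity $f(u)=u(1-u)$ enters only through strong $L^2$ convergence, so the analysis concentrates on the linear diffusion across the thin coating. First I would record $\delta$-uniform bounds. Since $f(0)=0$ and $f(s)\le s$, the constant $M:=\max\{1,\|u_0\|_{L^\infty(\Omega)}\}$ is a supersolution and $0$ a subsolution, so the comparison principle gives $0\le u_\delta\le M$ uniformly in $\delta$. Testing the weak form of \eqref{fkpp: PDE} with $u_\delta$ and exploiting the optimal-alignment hypothesis \eqref{OAC} --- which makes the diffusion tensor in $\Omega_2$ diagonal, $\mathrm{diag}(\sigma,\mu,\mu)$, in the local frame $(\mathbf n,\mathbf s)$ near $\Gamma$, so that the layer energy splits with the correct signs --- I expect the bound
\[
\sup_{0\le t\le T}\|u_\delta(t)\|_{L^2(\Omega_1)}^2+k\int_0^T\!\!\int_{\Omega_1}|\nabla u_\delta|^2+\sigma\int_0^T\!\!\int_{\Omega_2}|\partial_r u_\delta|^2+\mu\int_0^T\!\!\int_{\Omega_2}|\nabla_\Gamma u_\delta|^2\le C,
\]
with $C$ independent of $\delta$ and $r$ the signed distance to $\Gamma$ inside $\Omega_2$; testing the equation against functions supported in $\Omega_1$ also gives $\partial_t u_\delta$ bounded in $L^2(0,T;H^{-1}(\Omega_1))$.

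On the fixed cylinder $\Omega_1\times(0,T)$ these bounds and the Aubin--Lions--Simon lemma produce a subsequence with $u_\delta\rightharpoonup v$ in $W_2^{1,0}(\Omega_1\times(0,T))$ and $u_\delta\to v$ in $C([0,T];L^2(\Omega_1))$; in particular $f(u_\delta)\to f(v)$ in $L^2(\Omega_1\times(0,T))$ since $f$ is Lipschitz on $[0,M]$. I would then pass to the limit in the weak formulation tested against $\varphi\in C^1(\overline{\Omega_1}\times[0,T])$ with $\varphi(\cdot,T)=0$, extended into $\Omega_2$ so as to vanish on $\partial\Omega$. The $\Omega_1$-integrals converge directly to the weak form of \eqref{fkpp: EPDE}, and everything that remains is a trace term on $\Gamma\times(0,T)$ coming from the coating.

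The core of the proof is the analysis of that coating term. Rewriting the $\Omega_2$-integral in stretched coordinates $(p,\rho)\in\Gamma\times(0,1)$, $r=\delta\rho$, the leading-order transverse profile of $u_\delta$ is governed by the cell problem $-\partial_{\rho\rho}w-\tfrac{\mu\delta^2}{\sigma}\Delta_\Gamma w=0$ in $\Gamma\times(0,1)$, $w|_{\rho=0}=v|_\Gamma$, $w|_{\rho=1}=0$ (the last condition encoding $u=0$ on $\partial\Omega$), and the flux delivered to $\Gamma$ is $\tfrac{\sigma}{\delta}\partial_\rho w|_{\rho=0}$. The hypotheses $\tfrac{\sigma}{\delta}\to\alpha$ and $\sigma\mu\to\gamma$ fix $\tfrac{\mu\delta^2}{\sigma}\to\tfrac{\gamma}{\alpha^2}=:\beta$, and resolving the cell problem in the eigenbasis of $-\Delta_\Gamma$ gives the limiting flux as $-\alpha\,\Lambda_\beta v|_\Gamma$, where $\Lambda_\beta$ is the Dirichlet-to-Neumann operator of the slab, with symbol $\sqrt{\beta\lambda}\coth\!\big(\sqrt{\beta\lambda}\big)$ on the $\lambda$-eigenspace of $-\Delta_\Gamma$. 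Reading off degenerate regimes recovers the entries of Table~\ref{fkpp: tb}: $v=0$ on $\Gamma$ when $\alpha=\infty$; the homogeneous Neumann condition when $\alpha=\gamma=0$; the Robin condition $k\partial_n v+\alpha v=0$ when $0<\alpha<\infty$, $\gamma=0$; the genuine Dirichlet-to-Neumann condition $k\partial_n v+\alpha\Lambda_\beta v=0$ when $0<\alpha,\gamma<\infty$; and, when $\beta=\infty$, the half-Laplacian condition $k\partial_n v+\sqrt{\gamma}\,(-\Delta_\Gamma)^{1/2}v=0$, using $\sqrt{\beta\lambda}\coth\!\big(\sqrt{\beta\lambda}\big)\sim\sqrt{\beta\lambda}$ as $\beta\to\infty$ together with $\alpha\sqrt{\beta}=\sqrt{\gamma}$.

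The step I expect to be the main obstacle is making this limit rigorous: I need to show that the trace $u_\delta|_\Gamma$ converges to $v|_\Gamma$ (strongly in $L^2(\Gamma\times(0,T))$, weakly in the fractional space $H^s(\Gamma)$, $s\in\{0,\tfrac12\}$, dictated by the cell problem) and that the true profile of $u_\delta$ in $\Omega_2$ is well approximated by the cell solution $w$ in the $\sigma$- and $\mu$-weighted norms above, while absorbing the curvature and Jacobian corrections generated by the change of variables near $\Gamma$. Once the identification is settled, each boundary-value problem in Table~\ref{fkpp: tb} is well posed by a standard variational argument --- the relevant boundary operators are nonnegative and self-adjoint, so the linear part generates an analytic semigroup and the semilinear problem has a unique bounded weak solution --- and therefore the whole family $u_\delta$, not merely a subsequence, converges to $v$.
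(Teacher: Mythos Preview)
Your overall architecture---compactness on $\Omega_1\times(0,T)$ followed by identification of the boundary term---is the same as the paper's, and your treatment of the nonlinearity and the a~priori $L^\infty$ bound is fine. The substantive difference is in \emph{how} the coating contribution is handled, and here the paper takes a route that neatly bypasses the obstacle you flagged.

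You propose to understand the profile of the \emph{solution} $u_\delta$ in the coating via the cell problem and then read off the flux; as you say, the hard part is showing $u_\delta$ is close to the cell solution in the weighted norms and controlling its trace. The paper instead puts the cell problem on the \emph{test function}: given $\xi$ on $\overline{\Omega}_1\times[0,T]$, it extends $\xi$ into $\Omega_2$ by the auxiliary function $\psi$ solving $\sigma\psi_{rr}+\mu\Delta_\Gamma\psi=0$ on $\Gamma\times(0,\delta)$ with $\psi|_{r=0}=\xi|_\Gamma$, $\psi|_{r=\delta}=0$. Because $\psi$ is harmonic for the layer operator, one integration by parts turns the entire coating integral into
\[
\sqrt{\sigma\mu}\int_0^T\!\!\int_\Gamma u(s,0,t)\,\mathcal{J}^h[\xi](s)\,ds\,dt
\]
(with $h=\delta\sqrt{\mu/\sigma}$) plus two curvature-correction terms that are $O(\delta)$ times the layer energy. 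Since $\xi$ is smooth and fixed, $\mathcal{J}^h[\xi]$ and its asymptotics in $h$ are explicit; only the trace $u|_\Gamma$ enters, and only through its weak limit, which is already controlled by the $H^1(\Omega_1)$ bound. No approximation of $u_\delta$ by a cell profile is ever needed. This is the main simplification the paper buys over your plan.

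Two smaller points. First, for $C([0,T];L^2(\Omega_1))$ convergence down to $t=0$ the paper does not use Aubin--Lions; it uses the weighted estimate $\int_{Q_T} t\,u_t^2\le C$ (obtained by testing with $tu_t$) to get equicontinuity on $[t_0,T]$, and then a separate initial-layer argument with mollified data for $[0,t_0]$. Your $H^{-1}$ bound on $u_t$ gives $L^2_tL^2_x$ compactness but not, by itself, $C_tL^2_x$ at $t=0$. Second, your case analysis omits $\sigma\mu\to\infty$ with $\alpha<\infty$; there the harmonic-extension argument first forces $\nabla_\Gamma v=0$, and the paper then switches to the \emph{linear} extension $\psi=(1-r/\delta)m(t)$ of a spatially constant test function to extract the integral condition $\int_\Gamma(k\partial_{\mathbf n}v+\alpha v)=0$.
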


\begin{table}[!htbp]
\centering
\caption{Effective boundary conditions on $\Gamma \times (0, T)$ for the Dirichlet problem \eqref{fkpp: PDE}.}\label{fkpp: tb}
\begin{tabular}{l|lll}
As $\delta\to 0$   
    & $\frac{\sigma}{\delta}\to 0$ 
    & $\frac{\sigma}{\delta}\to\alpha\in(0,\infty)$ 
    &  $\frac{\sigma}{\delta}\to\infty$\\
    \hline
    \hline
    $\sigma\mu\to 0$  
    &  $\frac{\partial v}{\partial \textbf{n}}=0$	
    & $k\frac{\partial v}{\partial \textbf{n}}=-\alpha v$	
    &  $v=0$\\
    \hline
    $\sqrt{\sigma\mu}\to\gamma\in (0,\infty)$	
    &  $k\frac{\partial v}{\partial \textbf{n}}=\gamma \mathcal{J}^\infty [v]$ 
    &  $k\frac{\partial v}{\partial \textbf{n}}=\gamma \mathcal{J}^{\gamma/\alpha} [v]$	
    & $v=0$\\
    \hline
    $\sigma\mu\to \infty$
    & \makecell{ $\nabla_{\Gamma} v =0$,\\ $\int_{\Gamma}\frac{\partial v}{\partial\textbf{n}}=0 $}	
    & \makecell{$\nabla_{\Gamma} v =0$,\\ $\int_{\Gamma}(k\frac{\partial v}{\partial\textbf{n}}+\alpha v) =0 $}	
    &  $v=0 $  \\
    \hline
    \end{tabular}
\end{table}
The boundary condition $\nabla_\Gamma v =0$ on $\Gamma \times (0, T)$ implies that $v$ is a constant in $x$, where $\nabla_\Gamma$ represents the surface gradient on $\Gamma$;
$k\frac{\partial v}{\partial \textbf{n}}=\beta \Delta_\Gamma v $ can be interpreted as a second-order partial differential equation on $\Gamma$, where the operator $\Delta_\Gamma $ is the Laplacian-Beltrami defined on $\Gamma$, signifying that the thermal flux across $\Gamma$ in the outer normal direction results in heat accumulation that then diffuses with a rate $\beta$; $\mathcal{J}^{h}$ is the Dirichlet-to-Neumann mapping as defined in the following: for $ h \in(0, \infty)$, and smooth $g$ defined on $\Gamma$, $\mathcal{J}^{h}[g]$ is defined by
\begin{equation*}
    \mathcal{J}^h[g](s) := \Psi_R(s, 0),
\end{equation*}
where $\Psi$ is the bounded solution of 
 \begin{equation*}
    \left\{
             \begin{array}{ll}
                   \Psi_{RR}+\Delta_\Gamma      \Psi=0 , & \Gamma\times(0,H),\\
                  \Psi (s, 0) = g(s), &      \Psi(s, H)=0.
             \end{array}
  \right.
 \end{equation*}
The analytic formula for $\mathcal{J}^h[g]$ is given in the future, from which $\mathcal{J}^\infty[g]=\underset{h \to \infty}{\lim}\mathcal{J}^h[g]= - \left(-\Delta_\Gamma\right)^{1/2} g$ is the fractional Laplacian-Beltrami of order $1/2$. 

It is worth mentioning that the convergence of $u$ is valid only in the finite time interval $[0, T]$. Our interest lies in determining the maximal time interval on which the uniform convergence persists, referred to as the \textit{lifespan} of the EBC. The lifespan represents the duration during which the EBC remains effective. Notably, the lifespan of an EBC is not necessarily infinite. In the case of $f(u) \equiv 0$ in \eqref{fkpp: PDE}, it turns out that $u \to 0$ as $t \to \infty$, while $v \to \overline{u}_0$ when the EBC is Neumann, where $\overline{u}_0$ is the average of $u_0$ over $\Omega_1$. Thus, the lifespan of the Neumann EBC is not infinite if $\overline{u}_0 \neq 0$. In particular, Pond \cite{Pond2011} shows that if $\sigma = \mu =\delta^m$ with $m >2$, then the lifespan of the Neumann EBC is at the order of $o(\delta^{1-m})$ rather than $O(\delta^{1-m})$.

\smallskip

The remaining part of this paper is to prove the following theorem in which the lifespan of each EBC in Table \ref{fkpp: tb} is infinite.
\begin{theorem}\label{fkpp: thm2}
Suppose all conditions in Theorem \ref{fkpp: thm1} hold, then as $\delta \to 0$, the weak solution $u$ of \eqref{fkpp: PDE} satisfies
\begin{equation*}
\underset{0 \le t \le \infty}{\max} || u(\cdot, t) - v(\cdot, t) ||_{L^2(\Omega_1)} \longrightarrow  0,
\end{equation*}
where $v$ is the weak solution of \eqref{fkpp: EPDE} subject to any effective boundary condition in Table \ref{fkpp: tb}.
\end{theorem}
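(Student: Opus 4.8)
The plan is to upgrade the finite-time convergence of Theorem~\ref{fkpp: thm1} to a uniform-in-time convergence by combining three ingredients: (i) the finite-time estimate already available on a fixed window $[0,T_0]$; (ii) a long-time stabilization result showing that both $u(\cdot,t)$ and $v(\cdot,t)$ converge, as $t\to\infty$, to the (same) attractor of the limiting problem, uniformly with respect to $\delta$; and (iii) a continuity/closeness argument gluing the two regimes. Concretely, I would fix $\varepsilon>0$ and seek $T_0=T_0(\varepsilon)$ and $\delta_0=\delta_0(\varepsilon)$ so that $\|u(\cdot,t)-v(\cdot,t)\|_{L^2(\Omega_1)}<\varepsilon$ both for $t\in[0,T_0]$ (by Theorem~\ref{fkpp: thm1}, taking $T=T_0$) and for $t\ge T_0$ (by the asymptotic analysis). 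The essential point is that the Fisher-KPP nonlinearity $f(u)=u(1-u)$ is dissipative: solutions are eventually trapped in $[0,1]$ (by comparison with the ODE $y'=y(1-y)$), which gives a uniform-in-$\delta$, uniform-in-time $L^\infty$ bound and compactness.

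For the long-time behavior I would argue as follows. For the limiting problem \eqref{fkpp: EPDE} with any of the boundary conditions in Table~\ref{fkpp: tb}, the zero steady state and the positive steady state(s) are determined by the sign of the principal eigenvalue $\lambda_1$ of $-k\Delta$ on $\Omega_1$ under that boundary condition; standard KPP theory (sub/supersolutions, LaSalle/Lyapunov functional $E[v]=\int_{\Omega_1}\big(\frac{k}{2}|\nabla v|^2-\frac{v^2}{2}+\frac{v^3}{3}\big)$ plus the relevant boundary term) shows $v(\cdot,t)$ converges in $L^2(\Omega_1)$ as $t\to\infty$ to a steady state $v_\infty$, with exponential rate. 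The same must be shown for $u$ on the full domain $\Omega=\Omega_1\cup\Omega_2$: the energy functional for \eqref{fkpp: PDE} is $E_\delta[u]=\int_\Omega\big(\frac12 A\nabla u\cdot\nabla u-\frac{u^2}{2}+\frac{u^3}{3}\big)$, which is a strict Lyapunov functional, so $u(\cdot,t)\to u_\infty^\delta$, a steady state of \eqref{fkpp: PDE}. One then needs the convergence of steady states, $u_\infty^\delta\to v_\infty$ in $L^2(\Omega_1)$ as $\delta\to0$ --- this is exactly the elliptic analogue of Theorem~\ref{fkpp: thm1} and should follow from the same variational/energy machinery applied to the stationary problem (indeed it is the $t=\infty$ shadow of that theorem). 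The delicate requirement is \emph{uniformity}: the rate of attraction to the steady state must not degenerate as $\delta\to0$. I would obtain this from a uniform spectral gap --- the second-largest relevant eigenvalue stays bounded away from the principal one uniformly in $\delta$ (again via the convergence of the associated eigenvalue problems, which is implicitly behind Table~\ref{fkpp: tb}) --- together with the fact that orbits enter a small neighborhood of $v_\infty$ in a time $T_1$ that can be chosen independent of $\delta$ because the absorbing set $[0,1]$ and the energy dissipation rate are $\delta$-independent.

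Assembling: given $\varepsilon$, pick $T_0$ large enough that $\|u(\cdot,t)-u_\infty^\delta\|_{L^2(\Omega_1)}<\varepsilon/3$ for all $t\ge T_0$ and all small $\delta$ (uniform attraction), and that $\|v(\cdot,t)-v_\infty\|_{L^2(\Omega_1)}<\varepsilon/3$ for $t\ge T_0$; shrink $\delta_0$ so that $\|u_\infty^\delta-v_\infty\|_{L^2(\Omega_1)}<\varepsilon/3$ (steady-state convergence) and so that, by Theorem~\ref{fkpp: thm1} with $T=T_0$, $\sup_{[0,T_0]}\|u-v\|_{L^2(\Omega_1)}<\varepsilon$. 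The triangle inequality then gives $\sup_{t\ge0}\|u(\cdot,t)-v(\cdot,t)\|_{L^2(\Omega_1)}<\varepsilon$ for $\delta<\delta_0$, which is the claim. The main obstacle I anticipate is establishing the $\delta$-uniform exponential attraction in the long-time regime, particularly in the borderline cases $\gamma=\infty$ or $\alpha=\infty$ where the limiting boundary operator changes type (Dirichlet, or the nonlocal constraint $\nabla_\Gamma v=0$); there one must verify that the principal eigenvalue and spectral gap of the $\delta$-problem converge to those of the effective problem without loss of uniformity, which requires a careful treatment of the boundary layer $\Omega_2$ in the stationary eigenvalue problem. A secondary technical point is handling the case where $\lambda_1$ of the effective operator is exactly $0$ (critical case, e.g.\ Neumann-type with the right sign), where convergence to the steady state may be only polynomial; but since $f$ here has a nondegenerate positive zero at $u=1$ whenever the trivial state is unstable, and the Neumann steady state is the nonzero constant $1$ (not $0$), this critical situation does not actually arise for $f(u)=u(1-u)$, and the convergence is genuinely exponential in every case.
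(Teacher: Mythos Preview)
Your overall architecture --- finite-time convergence on $[0,T_0]$, convergence of steady states $U^\delta\to V$, long-time attraction of each flow to its steady state, then gluing by the triangle inequality --- matches the paper's. The gap is exactly the step you flagged as ``the main obstacle'': you propose to control $\|u(\cdot,t)-U^\delta\|$ for $t\ge T_0$ via a $\delta$-uniform exponential rate coming from a uniform spectral gap of the linearization at $U^\delta$, but you do not establish this, and in the borderline regimes (e.g.\ $\alpha=0$, where $\lambda_1^\delta\to 0$ and $U^\delta$ is small in the layer $\Omega_2$, or the critical case $\lambda_1=1$) it is not at all clear that such a uniform gap exists. Your dismissal of the $\lambda_1=0$ case addresses only the $v$-flow; the difficulty is on the $u$-side.

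The paper sidesteps this entirely with a monotonicity trick that requires no rate. Write $\widetilde u=u-U$ (with $U=U^\delta$ the positive steady state of \eqref{fkpp: PDE}); then $\widetilde u_t-\nabla\cdot(A\nabla\widetilde u)=\widetilde u(1-u-U)$. The key observation is that $U$ itself is the positive principal eigenfunction, with eigenvalue $\eta_1=0$, of the weighted problem $-\nabla\cdot(A\nabla w)-(1-U)w=\eta w$ on $\Omega$ with Dirichlet data (since $-\nabla\cdot(A\nabla U)=U(1-U)$). The variational characterization of $\eta_1=0$ gives $\int_\Omega A\nabla\widetilde u\cdot\nabla\widetilde u-(1-U)\widetilde u^{\,2}\,dx\ge 0$, so testing the $\widetilde u$-equation with $\widetilde u$ yields
\[
\frac12\frac{d}{dt}\int_\Omega \widetilde u^{\,2}\,dx = -\int_\Omega\Big(A\nabla\widetilde u\cdot\nabla\widetilde u - (1-U)\widetilde u^{\,2}\Big)\,dx - \int_\Omega u\,\widetilde u^{\,2}\,dx \le 0.
\]
Thus $t\mapsto\|u(\cdot,t)-U\|_{L^2(\Omega)}$ is nonincreasing. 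One now chooses $T_\varepsilon$ so that, for the \emph{limit} problem only, $\|v(\cdot,t)-V\|_{L^2(\Omega_1)}<\varepsilon$ for $t\ge T_\varepsilon$; then takes $\delta$ small so that $\|u(\cdot,T_\varepsilon)-v(\cdot,T_\varepsilon)\|_{L^2(\Omega_1)}$, $\|U-V\|_{L^2(\Omega_1)}$, and $\|u(\cdot,T_\varepsilon)-U\|_{L^2(\Omega_2)}$ are each $<\varepsilon$ (the last by $|\Omega_2|\to 0$ and the uniform $L^\infty$ bound). The triangle inequality gives $\|u(\cdot,T_\varepsilon)-U\|_{L^2(\Omega)}\le C\varepsilon$, and monotonicity propagates this to all $t\ge T_\varepsilon$. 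The cases where $U$ or $V$ fail to exist ($\lambda_1\ge 1$, or $\lambda_1^\delta\ge 1$ along a subsequence) are handled by the same device with $U$ replaced by $0$ and $\eta_1$ replaced by $\lambda_1^\delta-1\ge 0$. So the ``uniform attraction'' you were worried about reduces to a one-line energy inequality, and no spectral-gap analysis is needed.
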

The proof of this theorem hinges on a comprehensive understanding of the non-negative steady states of \eqref{fkpp: PDE} and \eqref{fkpp: EPDE}. For general $f \neq 0$, the proof becomes complicated due to the impact of $f$ on altering the steady states of both $u-$problem and $v-$problem. However, in our specific case where $f=f(u)=u(1-u)$, this non-linearity facilitates the proximity between the two heat flows ($u-$flow and $v-$flow), even for large time.

There are lots of profound and intriguing findings in the literature regarding the study of EBCs. The original derivation of EBCs can be tracked back to the classic book by Carslaw and Jaeger \cite{HJ}. Subsequently, the elliptic problem involving reinforcement/coating was rigorously investigated by Brezis, Caffarelli, and Friedman \cite{BCF}, followed by Li and Zhang \cite{L2009, LZ} for further developments. Since then, numerous studies have emerged. See (\cite{CPW2012, GENG, G2023, LRWZ2009, LWW2014}) for the optimally aligned coating, \cite{HPW2013} for the model of wolf movement in which the diffusion tensor is degenerate along a road, and \cite{LL2022, LLW2022} for the error estimates between the solution of the original problem and the one of the effective problem. In the context of ecological problem in our case, see \cite{LSWW2021} for the derivation of bulk-surface model, \cite{LW2017} for the road-field model, and \cite{CHW2023} for the effect of an EBC. We also refer interested readers to a review paper by Wang \cite{W2016} on the study of EBCs for diffusion equations.

The organization of this paper is as follows. In Section \ref{sec: estimates}, some preliminaries are presented, including the definition of weak solutions of \eqref{fkpp: PDE} and \eqref{fkpp: EPDE},  and basic energy estimates from standard parabolic $L^2-$ theory. Section \ref{sec: EBCs} is devoted to deriving EBCs on $\Gamma \times (0, T)$ for the Fisher-KPP equation, in which an auxiliary function is proposed. In Section \ref{sec: lifespan}, based on the estimates for eigenvalue problems, we investigate the lifespan of EBCs by proving Theorem \ref{fkpp: thm2}.

Throughout this article, we always assume that $\Omega_1$ is fixed and bounded with $C^2$ smooth boundary $\Gamma$; the coating $\Omega_2$ is uniformly thick with $\partial\Omega$ approaching $\Gamma$ as $\delta \to 0$; $\sigma$ and  $\mu$ are positive functions of $\delta$.
\section{Preliminaries}\label{sec: estimates}
In the sequel, for notational convenience, let $C(T)$ represent a generic positive constant depending only on $T$ and $O(1)$ represent a quantity independent of $\delta$ but may vary from line to line.

\subsection{Eneregy estimates}
Let $W^{1,0}_2(Q_T)$ be the subspace of functions belonging to $L^2(Q_T)$ with first order weak derivatives in $x$ also being in $L^2(Q_T)$; $W^{1,1}_2(Q_T)$ is defined similarly with  the first order weak derivative in $t$ belonging to $L^2(Q_T)$; $W^{1,0}_{2,0}(Q_T)$ is the closure in $W^{1,0}_2(Q_T)$ of $C^\infty$ functions vanishing near $\overline {S}_T$, and $W^{1,1}_{2,0}(Q_T)$ is defined similarly. Furthermore, denote
$V^{1,0}_{2,0}(Q_T):=W^{1,0}_{2,0}(Q_T)\cap C  \left([0,T];L^2(\Omega)\right).$ 

Let us define one more Sobolev space on $Q^1_T= \Omega_1\times(0,  T): V^{1,0}_{2}(Q^1_T)=W^{1,0}_{2}(Q^1_T)\cap C  \left([0,T];L^2(\Omega_1)\right).$
We endow all these spaces with natural norms.

Rewrite $\int_0^T\int_{\Omega} u(x, t) dxdt$ as $\int_{Q_T} u(x, t) dxdt$ for simplicity. We begin with some energy estimates.
\begin{definition}\label{def}
A function $u$ is said to be a weak solution of the Dirichlet problem (\ref{fkpp: PDE}), if $u\in V^{1,0}_{2,0}(Q_T)$ and for any $\xi \in C^\infty (\overline{Q}_T) $ satisfying $\xi = 0$ at $t=T$ and also near $S_T$, it holds that
\begin{equation}
    \mathcal{A}[u,\xi] := -\int_{\Omega} u_0 \xi(x,0)dx + \int_{Q_T} \left( A(x)\nabla u \cdot \nabla\xi - u \xi_t-f(u)\xi \right)dxdt=0.  
 \end{equation}
\end{definition}

Note that $0$ and $M = \max\{ 1, ||u_0||_{L^\infty(\Omega)}\}$ are the corresponding lower and upper solution of \eqref{fkpp: PDE}. By the method of lower and upper solutions, for any small $\delta > 0$, (\ref{fkpp: PDE}) admits a unique weak solution $u \in V^{1,0}_{2,0}(Q_T)$, and $0 \leq u \leq M$ on $Q_T$. Moreover, $u$ satisfies the following ``transmission conditions" in the weak sense
\begin{equation}\label{trans}
  u_1=u_2, \quad k\nabla u_1 \cdot \textbf{n} = \sigma \nabla u_2 \cdot \textbf{n} \quad \text{ on }\Gamma,  
\end{equation}
where $u_1$ and $u_2$ are the restrictions of $u$ on $\Omega_1\times (0, T)$ and $\Omega_2\times (0, T)$, respectively. 

\begin{lemma}\label{fkpp: es}
Suppose $0 \leq u_0 \in L^\infty(\Omega)$. Then, any weak solution $u$ of  (\ref{fkpp: PDE}) satisfies the following inequalities.
\begin{equation*}
 \begin{split}
      \text{(i)}& \; 0 \leq u(x, t) \leq \max \{1 , ||u_0||_{L^\infty(\Omega)}\}. \\
     \text{(ii)}& \max_{t\in[0,T]} t\int_{\Omega}\nabla u \cdot A(x)\nabla udx +\int_{Q_T}t u_t^2dxdt
+\int_{Q_T}\nabla u \cdot A(x)\nabla u dxdt
     \leq C\left(T, |\Omega|, ||u_0||_{L^\infty(\Omega)}\right).\\
 \end{split}
\end{equation*}
\end{lemma}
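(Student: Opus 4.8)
The plan is to prove (i) by a comparison argument and (ii) by two energy identities, the second one weighted by the time variable $t$. For (i), I would simply invoke the sub/supersolution structure already noted before the lemma: since $f(0)=0$, the constant $0$ is a weak subsolution of \eqref{fkpp: PDE}, and since $f(M)=M(1-M)\le 0$ for $M=\max\{1,\|u_0\|_{L^\infty(\Omega)}\}\ge 1$, the constant $M$ is a weak supersolution; as $0\le u_0\le M$, the weak comparison principle for \eqref{fkpp: PDE} yields $0\le u\le M$ a.e. in $Q_T$. This is essentially the bound already used to assert existence and uniqueness, so it is brief.

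For the first (unweighted) bound in (ii), I would use $u$ itself as a test function in Definition \ref{def} — more precisely a Steklov average of $u$, then pass to the limit — to obtain, for a.e. $\tau\in(0,T]$,
\begin{equation*}
  \tfrac12\int_{\Omega}u^2(x,\tau)\,dx+\int_0^\tau\!\!\int_{\Omega}\nabla u\cdot A(x)\nabla u\,dx\,dt=\tfrac12\int_{\Omega}u_0^2\,dx+\int_0^\tau\!\!\int_{\Omega}f(u)\,u\,dx\,dt.
\end{equation*}
By (i) one has $|f(u)u|=|u^2(1-u)|\le M^2(1+M)$ pointwise, so the last term is at most $C(M)\,|\Omega|\,T$; discarding the nonnegative term $\tfrac12\int_\Omega u^2(x,\tau)\,dx$ gives $\int_{Q_T}\nabla u\cdot A\nabla u\,dx\,dt\le C(T,|\Omega|,\|u_0\|_{L^\infty(\Omega)})$, which controls the third term in (ii).

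For the weighted terms I would (formally) multiply the equation in \eqref{fkpp: PDE} by $t\,u_t$ and integrate over $\Omega\times(0,\tau)$. Integrating by parts in $x$ on $\Omega_1$ and $\Omega_2$ separately, the boundary terms on $\partial\Omega$ vanish because $u=0$, hence $u_t=0$, there, and the two interface contributions on $\Gamma$ cancel thanks to the transmission conditions \eqref{trans} (continuity of $u$, hence of $u_t$, and $k\nabla u_1\cdot\textbf{n}=\sigma\nabla u_2\cdot\textbf{n}$), which is precisely what legitimises the computation despite the jump of $A$ across $\Gamma$; using the symmetry of $A$ and $\int_0^\tau\frac t2\frac{d}{dt}E(t)\,dt=\frac\tau2 E(\tau)-\frac12\int_0^\tau E(t)\,dt$ with $E(t):=\int_\Omega\nabla u\cdot A\nabla u\,dx$, one arrives at
\begin{equation*}
  \int_0^\tau\!\!\int_\Omega t\,u_t^2\,dx\,dt+\tfrac{\tau}{2}\,E(\tau)=\tfrac12\int_0^\tau\!\! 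E(t)\,dt+\int_0^\tau\!\!\int_\Omega t\,f(u)\,u_t\,dx\,dt.
\end{equation*}
Bounding the last term by Young's inequality, $\int_\Omega t f(u)u_t\,dx\le\tfrac12\int_\Omega t u_t^2\,dx+\tfrac12 C(M)^2|\Omega|\,t$, absorbing $\tfrac12\int_0^\tau\!\int_\Omega t u_t^2$ on the left, and using the unweighted bound to control $\tfrac12\int_0^\tau E(t)\,dt\le\tfrac12\int_0^T E(t)\,dt$, I obtain
\begin{equation*}
  \int_0^\tau\!\!\int_\Omega t\,u_t^2\,dx\,dt+\tau\,E(\tau)\le C(T,|\Omega|,\|u_0\|_{L^\infty(\Omega)}),
\end{equation*}
with the right-hand side independent of $\tau\in(0,T]$. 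Taking $\tau=T$ in the first term and the supremum over $\tau$ in the second, and combining with the unweighted bound, gives (ii).

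The main obstacle is that a generic weak solution only lies in $V^{1,0}_{2,0}(Q_T)$, so $u_t$ (and a fortiori $t\,u_t$) is not a priori an admissible test function and $E$ need not be absolutely continuous near $t=0$; I would make the weighted identity rigorous by the standard route — Galerkin (or time-difference/Steklov) approximation, for which the above estimate holds with the stated constants and $E(0)<\infty$, followed by passage to the limit using weak lower semicontinuity — and, if desired, by smoothing $A$ away from $\Gamma$ while keeping the transmission conditions in force, so that the piecewise integration by parts is fully justified. The remaining manipulations are routine.
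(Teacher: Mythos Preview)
Your proposal is correct and follows essentially the same route as the paper: part (i) via the lower/upper solutions $0$ and $M=\max\{1,\|u_0\|_{L^\infty}\}$, and part (ii) by testing with $u$ and with $t\,u_t$, integrating by parts, and making the formal computation rigorous through a Galerkin approximation. The paper merely sketches this in two sentences and omits the details you have supplied (the explicit energy identities, the Young-inequality absorption, and the remark on the transmission conditions), so your write-up is a faithful expansion of the intended argument.
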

\begin{proof}
$(i)$ can be proved by the lower and upper solutions method \cite{Pao2012} by choosing the lower solution 0 and the upper solution $u = \max \{1 , ||u_0||_{L^\infty(\Omega)}\}$. $(ii)$ follows from multiplying \eqref{fkpp: PDE} by $u$ and $t u_t$ respectively and performing the integration by parts in both $x$ and $t$ over $\Omega \times (0, T)$. By applying the same analysis to the Galerkin approximation of $u$, this formal argument can be rendered rigorous. Hence, we omit the details.
\end{proof}

\subsection{Weak solutions of effective problems}
We define weak solutions of (\ref{fkpp: EPDE}) together with the boundary conditions in Table \ref{fkpp: tb}.
\begin{definition}\label{def1}
Let the test function $\xi\in C^\infty (\overline{Q^1_T}) $ satisfy $\xi=0$ at $t=T$.

$(1)$ A function $v$ is said to be a weak solution of (\ref{fkpp: EPDE}) with the Dirichlet boundary condition $v=0$ if $v\in V^{1,0}_{2,0}(Q_T^1)$, and for any test function $\xi$, $v$ satisfies
\begin{equation}\label{wksol1}
\begin{split}
    \mathcal{L}[v,\xi] :=& -\int_{\Omega_1}u_0(x)\xi(x,0)dx+\int_0^T\int_{\Omega_1} \left(k\nabla v \cdot \nabla\xi-v\xi_t-f(v) \xi \right)dxdt
    =0.
\end{split}
\end{equation}

$(2)$ A function $v$ is said to be a weak solution of (\ref{fkpp: EPDE}) with the boundary conditions $\nabla_{\Gamma} v=0$ and
$\int_{\Gamma}(k\frac{\partial v}{\partial \textbf{n}}+\alpha v)=0$ for $\alpha \in [0, \infty)$ if for almost everywhere fixed $t\in (0, T)$, the trace of $v$ on $\Gamma$ is a constant, and if $\nabla_{\Gamma}\xi=0$ on $\Gamma$, it holds that $v\in V^{1,0}_2(Q_T^1)$ and $v$ satisfies
$$\mathcal{L}[v,\xi]=-\int_0^T\int_{\Gamma}\alpha v \xi dsdt.
$$

$(3)$ A function $v$ is said to be a weak solution of (\ref{fkpp: EPDE}) with the boundary condition $k\frac{\partial v}{\partial \textbf{n}}=\mathcal{B}[v]$, where $\mathcal{B}[v]=-\alpha v$, or $\gamma \mathcal{J}^h[v]$ for $h \in (0, \infty]$, if $v\in V^{1,0}_2(Q_T^1)$ and if for any test function $\xi$, $v$ satisfies
$$\mathcal{L}[v,\xi]=\int_0^T\int_{\Gamma} v \mathcal{B}[\xi] dsdt.
$$
\end{definition}
A weak solution of (\ref{fkpp: EPDE}) satisfies the initial value in the sense that $v(\cdot,t)\to u_0(\cdot)$ in $L^2(\Omega_1)$ as $t\to 0.$ 
\begin{theorem}\label{Uni}
Suppose that $\Gamma \in C^1$ and $0 \leq u_0\in L^\infty (\Omega_1)$. Then, \eqref{fkpp: EPDE} with any boundary condition in Tables \ref{fkpp: tb} has one and only one weak solution as defined in Definition \ref{def1}.
\end{theorem}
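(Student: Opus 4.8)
The plan is to recast each boundary condition of Table~\ref{fkpp: tb} in a single variational framework --- a Gelfand triple $\mathbb{H}\hookrightarrow L^{2}(\Omega_1)\hookrightarrow\mathbb{H}^{*}$ together with a bounded symmetric bilinear form carrying a boundary term --- to solve the resulting \emph{linear} parabolic problems by the classical $L^{2}$-theory, and then to recover the semilinear equation for $f(v)=v(1-v)$ by a fixed-point (or monotone-iteration) argument; uniqueness will follow from an energy estimate with a time-integrated test function. Concretely, take $\mathbb{H}=H^{1}_{0}(\Omega_1)$ for the Dirichlet condition, $\mathbb{H}=H^{1}(\Omega_1)$ for the Neumann, Robin and Dirichlet-to-Neumann conditions, and $\mathbb{H}=\{w\in H^{1}(\Omega_1):\ w|_{\Gamma}\ \text{is constant}\}$ for the two conditions on the row $\sigma\mu\to\infty$; define $\mathcal{E}(v,\xi)=k\int_{\Omega_1}\nabla v\cdot\nabla\xi\,dx+\mathcal{N}(v,\xi)$ with $\mathcal{N}\equiv0$ (Dirichlet/Neumann), $\mathcal{N}(v,\xi)=\alpha\int_{\Gamma}v\xi\,ds$ (Robin), $\mathcal{N}(v,\xi)=\alpha|\Gamma|\,(v|_{\Gamma})(\xi|_{\Gamma})$ (constant trace), and $\mathcal{N}(v,\xi)=-\gamma\int_{\Gamma}\xi\,\mathcal{J}^{h}[v]\,ds$ (Dirichlet-to-Neumann). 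The only point that is not routine is to establish the structural properties of $\mathcal{J}^{h}$: testing the cylinder equation $\Psi_{RR}+\Delta_{\Gamma}\Psi=0$ against $\Psi$ and integrating by parts over $\Gamma\times(0,H)$ with $\Psi(\cdot,H)=0$ gives
\[
\int_{\Gamma}g\,\mathcal{J}^{h}[g]\,ds=\int_{\Gamma}\Psi(s,0)\,\Psi_{R}(s,0)\,ds=-\int_{\Gamma\times(0,H)}\big(|\Psi_{R}|^{2}+|\nabla_{\Gamma}\Psi|^{2}\big),
\]
so that $-\mathcal{J}^{h}$ is a nonnegative self-adjoint operator, while the corresponding elliptic estimates for the cylinder problem together with the trace theorem give $|\mathcal{N}(v,\xi)|\le C\|v\|_{H^{1}(\Omega_1)}\|\xi\|_{H^{1}(\Omega_1)}$. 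In every case $\mathcal{E}$ is then bounded and symmetric on $\mathbb{H}\times\mathbb{H}$, $\mathcal{N}(v,v)\ge0$, and a G\aa rding inequality $\mathcal{E}(v,v)+\lambda\|v\|_{L^{2}(\Omega_1)}^{2}\ge\theta\|v\|_{\mathbb{H}}^{2}$ holds, which makes the linear $L^{2}$-theory applicable uniformly.

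\emph{Existence.} Since $0\le u_0\le M:=\max\{1,\|u_0\|_{L^{\infty}(\Omega_1)}\}$, replace $f$ outside $[0,M]$ by a bounded, globally Lipschitz $\tilde f$ coinciding with $f$ on $[0,M]$. For $g\in L^{2}(Q_{T}^{1})$ the linear problem $v_{t}-k\Delta v=g$ with boundary form $\mathcal{N}$ has a unique weak solution in $V^{1,0}_{2}(Q_{T}^{1})$, with the parabolic regularity of Lemma~\ref{fkpp: es}(ii); this is proved exactly as in Lemma~\ref{fkpp: es} by the Galerkin method with an $L^{2}(\Omega_1)$-orthonormal basis of eigenfunctions of $\mathcal{E}$, the boundary term causing no trouble because of its sign and boundedness. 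A short-time contraction mapping for $g=\tilde f(v)\mapsto v$ in $C([0,\tau];L^{2}(\Omega_1))$, with $\tau$ depending only on the Lipschitz constant of $\tilde f$, followed by iteration over $[0,T]$, yields a weak solution $v$ of \eqref{fkpp: EPDE} with the prescribed boundary condition; alternatively, since $0$ and $M$ remain ordered sub- and super-solutions of each effective problem, one may run a monotone iteration on the linear problems. The weak comparison principle then gives $0\le v\le M$, so the truncation is inactive and $\tilde f(v)=f(v)$, and $v\in V^{1,0}_{2}(Q_{T}^{1})\subset C([0,T];L^{2}(\Omega_1))$ attains $u_0$ in $L^{2}(\Omega_1)$ as $t\to0$.

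\emph{Uniqueness.} Let $v_{1},v_{2}$ be weak solutions with the same boundary condition and $w:=v_{1}-v_{2}$, so $w(\cdot,0)=0$ and $w$ satisfies the corresponding weak formulation of Definition~\ref{def1} with $u_0$ replaced by $0$ and the forcing $f(v)$ replaced by $f(v_{1})-f(v_{2})=c\,w$, where $c:=1-v_{1}-v_{2}$ satisfies $\|c\|_{L^{\infty}(Q_{T}^{1})}\le1+2M$. Fix $s\in(0,T]$ and take as test function $\xi(x,t)=\int_{t}^{s}w(x,\tau)\,d\tau$ for $t<s$ and $\xi\equiv0$ for $t\ge s$; by a density argument the weak formulation extends to such $\xi$ (it belongs to $W^{1,1}_{2}(Q_{T}^{1})$, vanishes at $t=T$, and inherits the required trace behaviour on $\Gamma$ from $w$). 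Since $\xi_{t}=-w$ on $(0,s)$ and $\xi(\cdot,s)=0$, the identity collapses to
\[
\tfrac{k}{2}\int_{\Omega_1}|\nabla\xi(x,0)|^{2}\,dx+\int_{0}^{s}\!\!\int_{\Omega_1}w^{2}\,dx\,dt+\tfrac{1}{2}\,\mathcal{N}\big(\xi(\cdot,0),\xi(\cdot,0)\big)=\int_{0}^{s}\!\!\int_{\Omega_1}c\,w\,\xi\,dx\,dt,
\]
where the boundary term has been rewritten using $w=-\xi_{t}$ and the symmetry and time-independence of $\mathcal{N}$ (for the Dirichlet-to-Neumann case this uses the self-adjointness of $\mathcal{J}^{h}$), and it is $\ge0$ by the sign of $\mathcal{N}$. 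Using $\|\xi(\cdot,t)\|_{L^{2}(\Omega_1)}\le\sqrt{s}\,\|w\|_{L^{2}(\Omega_1\times(0,s))}$ and Cauchy--Schwarz, the right-hand side is at most $(1+2M)\,s\,\|w\|_{L^{2}(\Omega_1\times(0,s))}^{2}$, so $\|w\|_{L^{2}(\Omega_1\times(0,s))}^{2}\le(1+2M)\,s\,\|w\|_{L^{2}(\Omega_1\times(0,s))}^{2}$. Choosing $s<1/(1+2M)$ forces $w\equiv0$ on $\Omega_1\times(0,s)$, and iterating over $[s,2s],[2s,3s],\dots$ gives $w\equiv0$ on $Q_{T}^{1}$.

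\emph{The main obstacle.} Everything except the Dirichlet-to-Neumann boundary condition is standard linear-plus-semilinear parabolic theory. The real work is in identifying $-\mathcal{J}^{h}$ with a bounded, symmetric, nonnegative operator on the trace space via the cylinder energy identity above, and then carrying this sign consistently through the G\aa rding estimate (hence the Galerkin a priori bounds) and through the uniqueness identity; once this is in place, the nonlocal case is handled on exactly the same footing as the local ones.
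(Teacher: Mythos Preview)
Your proof is correct, but it is far more elaborate than what the paper actually does. The paper's proof is a single sentence: it invokes the method of lower and upper solutions with the ordered pair $0$ and $M=\max\{1,\|u_0\|_{L^\infty(\Omega_1)}\}$, citing Pao's monograph, and nothing more. Uniqueness is folded into that same machinery rather than argued separately.

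What you do differently is to build a unified variational framework---Gelfand triples $\mathbb{H}\hookrightarrow L^2(\Omega_1)\hookrightarrow\mathbb{H}^*$, a symmetric bilinear form $\mathcal{E}$ with the boundary contribution $\mathcal{N}$, a G\aa rding inequality---and then run linear Galerkin theory followed by a contraction or monotone iteration, with uniqueness proved by the Ladyzhenskaya time-integrated test function $\xi(x,t)=\int_t^s w(x,\tau)\,d\tau$. Your observation that the cylinder energy identity forces $-\mathcal{J}^h$ to be nonnegative and symmetric is exactly the structural fact that puts the Dirichlet-to-Neumann case on equal footing with the local conditions, and it is the genuine content that the paper's one-line proof leaves implicit. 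The paper's route is shorter because it outsources everything to the sub/super-solution machinery; yours is longer but self-contained, and it makes transparent \emph{why} the nonlocal boundary conditions cause no trouble---namely the sign of $\mathcal{N}$. Either approach is fine; yours would be the more honest one if the reader is not already convinced that Pao's framework covers the $\mathcal{J}^h$ boundary condition out of the box.
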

\begin{proof}
The theorem can be proved via the lower and upper solutions method by choosing the lower solution $0$ and the upper solution $v = \max \{1, || u_0||_{L^\infty(\Omega_1)} \}$.
\end{proof}

We end this section by introducing the following parametrization of the coating $\Omega_2$. Define $X$ by 
\begin{equation*}\label{curvilinear}
    \Gamma \times (0,  \delta) \longmapsto x = X(p, r) = p + r \textbf{n}(p) \in \mathbb{R}^3, 
\end{equation*}
where $p$ is the projection of $x$ on $\Gamma$; $\textbf{n}(p)$ is the unit normal vector of $\Gamma$ pointing out of $\Omega_1$ at $p$; $r$ is the distance from $x$ to $\Gamma$. As is well known (\cite[Lemma 14.16]{GT}), for a small $\delta>0$, $ X$ is a $C^1$ smooth diffeomorphism from $\Gamma \times (0, \delta)$ to $\Omega_2$; $r = r(x)$ is a $C^2$ smooth function of $x$ and is seen as the inverse of the mapping $x = X(p, r)$. 
 
By using local coordinates $s=(s_1, s_2)$ in a typical chart on $\Gamma$, we then have
\begin{equation}\label{cur}
    x = X(p(s), r) = X(s, r), \quad dx=(1+2Hr+\kappa r^2)dsdr  \quad \text{ in} \quad \overline{\Omega}_2,
\end{equation}
where $ds$ represents the surface element; $H(s) $and $\kappa(s) $ are the mean curvature and Gaussian curvature at  $p$ on $\Gamma$, respectively. Thus the Riemannian metric tensor at $x \in \overline{\Omega}_2$ induced from $ \mathbb{R}^3$ is defined as $G( s,r)$ with elements 
 $$g_{ij}(s, r)=g_{ji}(s, r) = < X_{i}, X_{j} >_{\mathbb{R}^3}, \quad i,j = 1,2,3,
 $$
where $X_i = X_{s_i}$ for $i =1,2$ and $X_3  = X_{r}$. Let $| G | :=  det G$ and $g^{ij}(s,r)$ be the element of the inverse matrix of $G$, denoted by $G^{-1}$. 

In the curvilinear coordinates $(s, r)$, the derivatives of $u$ are given as
\begin{equation}\label{derivative}
    \nabla u = u_r \textbf{n} +\nabla_s u, 
\end{equation}
where
\begin{equation}\label{derivative2}
  \begin{split}
     \nabla_s u  =\sum_{i,j=1,2} g^{ij}(s,r) u_{s_j}&X_{s_i}(s,r).
\end{split}
\end{equation}
In particular, if $r = 0$, then we have
$\nabla_{\Gamma}u=\sum_{i,j=1,2} g^{ij}(s,0)u_{s_j}p_{s_i}(s).$
Moreover, if $A(x)$ satisfies \eqref{OAC}, we have
\begin{equation}
     \nabla \cdot \left(A(x)\nabla u \right) = \frac{\sigma}{\sqrt{|G|}}\left(\sqrt{|G|} u_r\right)_r+\mu\Delta_{s}u,
\end{equation}
where
\begin{equation}
  \begin{split}
      \Delta_{s}u=\nabla_s \cdot \nabla_s u &=\frac{1}{\sqrt{|G|}}\sum_{ij=1,2}\left(\sqrt{|G|}g^{ij}(s, r)u_{s_i}\right)_{s_j}. 
  \end{split}
\end{equation}
Consequently, in $\overline{\Omega}_2$, we can rewrite $A(x)$ as
\begin{equation}\label{eq36}
    A(x) = \sigma \textbf{n}(p)\otimes\textbf{n}(p) + \mu \sum_{ij}g^{ij}(s,r)X_{s_i}(s,r)\otimes X_{s_j}(s,r).
\end{equation}

\section{EBCs for the Fisher-KPP equation}\label{sec: EBCs}
The aim of this section is to prove Theorem \ref{fkpp: thm1} in which EBCs are obtained on $\Gamma \times (0, T)$. 
\subsection{Auxiliary functions }
Enlightened by \cite{CPW2012}, an auxiliary function $\psi$ is constructed in $\Omega_2$. In the curvilinear coordinates $(s,r),$ for any $t \in [0, T]$ and smooth $g(s)$ defined on $\Gamma$, let $\psi(s, r, t)$ be the bounded solution of 
\begin{equation}\label{AF}
    \left\{
             \begin{array}{ll}
             \sigma      \psi_{rr}+\mu \Delta_\Gamma      \psi=0 , & \Gamma \times (0, \delta),\\
             \psi (s, 0, t) = g(s), &      \psi(s, \delta, t)=0.
             \end{array}
  \right.
\end{equation}
It follows from the maximum principle that $\psi$ is unique. Multiplying \eqref{AF} by $\psi$ and integrating by parts over $\Gamma \times (0, \delta)$, we arrive at
\begin{equation}\label{eq38}
      \int_0^\delta\int_{\Gamma} \left(\sigma     \psi_r^2+\mu |\nabla_\Gamma \psi|^2 \right)ds dr = -\int_{\Gamma} \sigma  \psi_r(s,0,t)g(s)ds.
\end{equation} 
Multiplying \eqref{AF} by $u$ and performing the integration by parts again, we get
\begin{equation}\label{eqno39}
    \begin{split}
     \int_0^\delta \int_{\Gamma} \left(\sigma     \psi_r u_r+\mu \nabla_\Gamma \psi \cdot \nabla_\Gamma u \right) dsdr &= -\int_{\Gamma} \sigma \psi_r(s,0,t) u(s,0, t)ds.
      \end{split}
\end{equation} 

We next assume $r = R\sqrt{\sigma/\mu}$ and $\Psi(s, R)= \psi(s, R\sqrt{\sigma/\mu}, t).$ Suppressing $t$ and plugging $r$ into \eqref{AF} lead to
\begin{equation}\label{rescale}
    \left\{
             \begin{array}{ll}
                   \Psi_{RR}+\Delta_\Gamma \Psi = 0, & \Gamma \times (0, h),\\
                  \Psi (s, 0) = g(s), & \Psi(s, h)=0,
             \end{array}
  \right.
\end{equation}
where $h=\delta\sqrt{\frac{\mu}{\sigma}}=\frac{\mu\delta}{\sqrt{\sigma\mu}}=\frac{\sqrt{\sigma\mu}}{\sigma/\delta}.$ Then the Dirichlet-to-Neumann operator is defined by
\begin{equation}\label{D2N}
\mathcal{J}^h[g](s) := \Psi_R(s,0),
\end{equation}
from which
\begin{equation}\label{eqno36}
\begin{split}
    \sigma \psi_r(s, 0, t) &= \sqrt{\sigma\mu}      \Psi_R(s,0) = \sqrt{\sigma\mu} \mathcal{J}^h[g](s). 
\end{split}
\end{equation}
By using separation of variables, $\mathcal{J}^h[g]$ can be given in eigenvalues and eigenfunctions of the operator $- \Delta_\Gamma$. In particular, we have
\begin{equation}\label{eq37}
	\begin{split}
             \Psi(s, R) & = \sum_{n=1}^\infty  \frac{-g_{n}e^{-\sqrt{\lambda_n} h}}{2 sinh(\sqrt{\lambda_n} h)} \left(e^{\sqrt{\lambda_n}R}-e^{\sqrt{\lambda_n}(2h-R)}\right)e_n(s),
    \end{split}
\end{equation}
where $ g_{n} = < e_n, g> = \int_{\Gamma} e_n g ds$; $\lambda_n$ and $e_n(s)$ are  the  eigenvalues and the corresponding eigenfunctions of $-\Delta_{\Gamma}$ defined on $\Gamma$. Thus, we obtain 
\begin{equation}\label{eq315}
\begin{split}
  \mathcal{J}^h[g](s) = &-\sum_{n=1}^\infty  \frac{\sqrt{\lambda_n} e_n(s)g_n}{tanh(\sqrt{\lambda_n}h)},
\end{split}
\end{equation}
which results from (\ref{D2N}) and  (\ref{eq37}).
Furthermore, if $h\to H \in (0, \infty ]$, we then have
\begin{equation*}
          \begin{split}
          \mathcal{J}^h[g](s)-\mathcal{J}^H[g](s)&=\sum_{n=1}^\infty \sqrt{\lambda_n} e_n(s)g_n \left(\frac{1}{tanh(\sqrt{\lambda_n}H)} -\frac{1}{tanh(\sqrt{\lambda_n}h)} \right) \\
           &=|H-h|\sum_{n=1}^\infty \lambda_n e_n(s)g_n\frac{-4}{(e^{\sqrt{\lambda_n}h^\prime}-e^{-\sqrt{\lambda_n}h^\prime})^2}\\
           &= O(|H-h|),
          \end{split} 
\end{equation*}
for some $h^\prime$ between $h$ and $H$, which means that $\mathcal{J}^h[g] \to \mathcal{J}^H[g]$ uniformly as $h \to H$, and $\mathcal{J}^\infty[g] = -(-\Delta_\Gamma)^{1/2}g.$

In what follows, we will estimate the size of the term $\Psi_R(s, 0)$ for a sufficiently small $\delta$. If $h \to 0$ as $\delta \to 0$, then it holds from (\ref{eq315}) that
\begin{equation*}
\begin{split}
    \left|\Psi_R(s, 0)+\frac{g(s)}{h}\right|& \leq  h \|g\|_{C^2(\Gamma)}.
\end{split}
\end{equation*}
Combining this with (\ref{eqno36}), we are led to
\begin{equation}\label{eq39}
 \begin{split}
     \sqrt{\sigma \mu} \Psi_R(s,0) &= \frac{\sigma}{\delta}\left(-g(s)+O(h^2)\right).
 \end{split}
\end{equation} 
On the other hand, if $h \to H \in (0, \infty]$ as $\delta\to 0$, then from the Taylor expansion for $\Psi(s, R)$, we obtain 
\begin{equation*}
         \Psi_R(s,0)=\frac{    \Psi(s, R)-     \Psi(s, 0)}{R}-\frac{R}{2}     \Psi_{RR}(s, \overline{R}), 
\end{equation*}
for some $\overline{R}\in [0,R]$. Then, it holds from the maximum principle that
\begin{equation*}
\begin{split}
    \| \Psi_R(s, 0)\|_{L^\infty(\Gamma))}&\leq\frac{2}{R}\|     \Psi\|_{L^\infty(\Omega_2)}+R\|      \Psi_{RR}\|_{L^\infty(\Omega_2)}\leq \frac{3\|g\|_{C^2(\Gamma)}}{R},
\end{split}
\end{equation*}
from which it turns out that 
\begin{equation}\label{eq310}
    \sqrt{\sigma\mu} \|\Psi_R\|_{L^\infty(\Gamma)}=\frac{O(1)\sqrt{\sigma\mu}}{R}. 
\end{equation}

\begin{remark}
For $h \in (0, \infty)$, $\mathcal{J}^h[g]$ is defined for smooth $g$. However, it is worth noting that it is also well-defined for $g \in H^{\frac{1}{2}}(\Gamma)$ where 
 $H^{\frac{1}{2}}(\Gamma)$ is defined by the completion of smooth functions under the $H^{\frac{1}{2}}(\Gamma)$ norm. Moreover, $\mathcal{J}^H: H^{\frac{1}{2}}(\Gamma) \to H^{-\frac{1}{2}}(\Gamma)$ is linear and symmetric with $H^{-\frac{1}{2}}(\Gamma)$ being the dual space of $H^{\frac{1}{2}}(\Gamma)$.
\end{remark}

\subsection{Main proof of Theorem \ref{fkpp: thm1}}
The main proof consists of two steps. Firstly, we establish the weak convergence of $u \to$ some $v$ in $W^{1,0}_{2}\left(\Omega_1\times(0,T)\right)$, and strong convergence of that in $C([0, T]; H^1(\Omega_1))$ after passing to a subsequence of $\delta > 0$. Subsequently, we use the auxiliary function $\psi$ to show that $v$ is a weak solution of \eqref{fkpp: EPDE} subject to the effective boundary conditions in Table \ref{fkpp: tb}.

\begin{proof}[Proof of Theorem \ref{fkpp: thm1}]
\textbf{Step 1.} We begin with the proof by considering the compactness of weak solutions $\{u\}_{\delta>0}$ of \eqref{fkpp: PDE}.

By Lemma \ref{fkpp: es}, $\{u\}_{\delta>0}$ is bounded in $W^{1,0}_{2}\left(\Omega_1\times(0,T)\right)$ and $C([t_0, T]; H^1(\Omega_1))$ for any small $t_0 \in (0, T]$. By Banach-Eberlein theorem, $ u \to $ some $v$ weakly in $W^{1,0}_{2}\left(\Omega_1\times(0,T)\right)$ and $C([t_0, T]; H^1_0(\Omega_1))$ after passing to a subsequence of $\delta \to 0.$ Since the embedding $H^1(\Omega_1) \hookrightarrow L^2(\Omega_1)$ is compact, for any fixed $t_0$, $\{u\}_{\delta>0}$ is pre-compact in $L^2(\Omega_1).$ Moreover, the functions $\{u\}_{\delta>0}$ : $t\in[t_0,T]\mapsto u(\cdot,t)\in L^2(\Omega_1)$ are equicontinuous because it holds from Lemma \ref{fkpp: es} that the term $\int_{Q_T}t u_t^2dxdt$ is bounded. Consequently, by the generalized Arzela-Ascoli theorem, after passing to a further subsequence of $\delta \to 0,$ $u \to v$ strongly in $C\left([t_0,T];L^2(\Omega_1)\right)$.

In the following, it suffices to prove that the strong convergence is in $C\left([0,T];L^2(\Omega_1)\right)$. We then take a sequence $u_0^n  \in C_0^\infty(\Omega_1)$ that can be constructed by multiplying $   u_0  $ by cut-off functions in the outer normal direction of $\Gamma$ such that $u_0^n \equiv 0$ in $\Omega_2$, $\|u_0-  u_0^n \|_{L^2(\Omega)}\leq \frac{1}{n}+\|u_0\|_{L^2(\Omega_2)}$,  and $\|\nabla  u_0^n \|_{L^2(\Omega)} \leq C(n) $. Then, we decompose $u=u_1+u_2$, where $u_1$ is the weak solution of \eqref{fkpp: PDE} with $f \equiv 0$ and the initial value replaced by $ u_0-u_0^n$, and $u_2$ is the weak solution of \eqref{fkpp: PDE} with the initial value replaced by $u_0^n$. 

Since the equation concerning $u_1$ is homogeneous, we have 
\begin{equation}\label{eq25}
\|u_1(\cdot, t)\|_{L^2(\Omega)} \leq \|u_0-  u_0^n \|_{L^2(\Omega)} \leq \frac{1}{n}+\|u_0\|_{L^2(\Omega_2)}.
\end{equation}
Multiplying the equation concerning $u_2$ by $(u_2)_t$ and performing integration by parts, we get 
\begin{equation*}
\begin{split}
   \int_0^t\int_\Omega(u_2)^2_t dxd\tau + \int_\Omega \nabla u_2(x,t) \cdot A(x) \nabla u_2(x,t)dx 
  \leq & \int_0^t\int_\Omega f^2(u) dxd\tau +\int_\Omega \nabla   u_0^n \cdot A(x) \nabla   u_0^n  dx\\
  \leq &\int_0^t\int_\Omega f^2(u) dxd\tau+k \int_{\Omega_1} |\nabla   u_0^n  |^2dx\\
  =:&F(f(u), n).
\end{split}
\end{equation*}
For any $t\in[0, t_0]$, it follows from this and (\ref{eq25}) that 
\begin{equation*}
\begin{split}
   \|u_2(\cdot,t)-  u_0^n  (\cdot)\|^2_{L^2(\Omega)} &=2\int_0^t\int_\Omega \left(u_2(x,t)-  u_0^n (x) \right) (u_2)_t dxd\tau\\
   & \leq 2\left(\int_0^t\int_\Omega \left(u_2(x,t)-  u_0^n (x)\right)^2 \right)^\frac{1}{2} \left(\int_0^t\int_\Omega (u_2)_t^2 \right)^\frac{1}{2}\\
&\leq 2\sqrt{t_0} \max_{t\in[0,t_0]} \|u_2(\cdot,t)-  u_0^n (\cdot)\|_{L^2(\Omega)}\left(F(f(u), n )\right)^\frac{1}{2},
\end{split}
\end{equation*}
resulting in
\begin{equation}\label{eq27}
    \max_{t\in[0,t_0]} \|u_2(\cdot,t)-  u_0^n (\cdot)\|\leq 2\sqrt{t_0}\left(F(f(u), n )\right)^\frac{1}{2}.
\end{equation}
Finally, it holds from (\ref{eq25}) and (\ref{eq27}) that
\begin{equation*}
\begin{split}
    \|u(\cdot,t)- u_0(\cdot)\|_{L^2(\Omega_1)} &\leq \|u_1(\cdot,t)\|_{L^2(\Omega_1)} +\|u_2(\cdot,t)- u_0(\cdot)\|_{L^2(\Omega_1)}+\|u_0-    u_0  (\cdot)\|_{L^2(\Omega_1)}\\
  \ &\leq \frac{2}{n}+2\|u_0\|_{L^2(\Omega_2)}+2\sqrt{t_0}\left(F(f(u), n )\right)^\frac{1}{2},
\end{split}
\end{equation*}
for any $n$ and sufficiently small $t_0$ and $\delta$. Thus, $\|u(\cdot,t)- u_0  (\cdot)\|_{L^2(\Omega_1)} $ can be arbitrary small for $t\in [0,t_0]$. Combining the fact that $u \to v$ strongly in $C\left([t_0,T];L^2(\Omega_1)\right)$, we conclude that $u \to v$ strongly in $C\left([0,T]; L^2(\Omega_1)\right)$  if we define $v(\cdot, 0) = u_0.$ 

\smallskip

\noindent
\textbf{Step 2.} In what follows, we aim to find all effective boundary conditions in Table \ref{fkpp: tb}.

By the compactness argument in Step 1, given any subsequence of $\delta$, we can ensure that $u \to $ some $v$ in all above spaces after passing to a further subsequence. In this step, we will show that $v$ is a weak solution of (\ref{fkpp: EPDE}) with effective boundary conditions listed in Table \ref{fkpp: tb}. Since $v$ is unique by Theorem \ref{Uni}, $u \to v$ without passing to any subsequence of $\delta >0$.

Let the test function $\xi \in C^{\infty}(\overline{\Omega}_1\times [0,T])$ satisfy $\xi=0$ at $t=T$, and extend $\xi$ to  $\overline{\Omega}\times [0,T]$ by defining
\begin{equation*}
   \overline{\xi}(x, t)=\left\{
\begin{aligned}
&\xi(x, t), &x\in \overline{\Omega}_1, \\
&\psi(p(x), r(x), t), &x\in\Omega_2,
\end{aligned}
\right.
\end{equation*}
where $\psi $ is introduced in \eqref{AF} with $g(s) = \xi(s, 0, t)$. It is easy to see $\overline{\xi}\in W^{1,1}_{2,0}(Q_T)$, and $\overline{\xi}$ is called the harmonic extension of $\xi$. According to Definition \ref{def}, it holds that
\begin{equation*}
\begin{split}
    \mathcal{A}[u,\overline{\xi}]&=-\int_{\Omega}u_0(x)\overline{\xi}(x,0)dx+\int_{0}^{T}\int_{\Omega} \left(\nabla \overline{\xi}\cdot A \nabla u-u\overline{\xi}_t-f(u) \overline{\xi} \right)dxdt=0,
\end{split}
\end{equation*}
which is equivalent to
\begin{equation}\label{eq314}
\begin{split}
     &\int_0^T\int_{\Omega_1} k\nabla\xi\cdot\nabla u dxdt-\int_\Omega u_0(x)\overline{\xi}(x,0)dx-\int_0^T\int_{\Omega} (u\overline{\xi_t} + f(u)\overline{\xi}) dxdt\\
  =&-\int_0^T\int_{\Omega_2}\nabla \psi \cdot A(x)\nabla u dxdt.
\end{split}
\end{equation}
By what we have proven in Step 1, $u \to v $ weakly in $W^{1, 0}_2\left(\Omega_1 \times(0,T) \right)$, and strongly in $C\left([0, T]; L^2(\Omega_1)\right)$ as $\delta \to 0$. Thus, it gives rise to
\begin{equation*}
\left\{
\begin{array}{ll}
     &  \int_{Q_T}u\xi_t dxdt \rightarrow \int_{Q^1_T}v\xi_t dxdt, \\
     & \int_{Q^1_T}\nabla u\cdot \nabla  \xi dxdt \to \int_{Q^1_T}\nabla v\cdot \nabla  \xi dxdt,\\
     &\int_{Q_T}f(u) \overline{\xi} dxdt \to \int_{Q^1_T}f(v) \xi dxdt.
\end{array}
\right. 
\end{equation*}
Then the left-hand side of (\ref{eq314}) is rewritten as
\begin{equation}\label{eqno53}
    \begin{split}
      \mathcal{L}[v, \xi] :=&\int_0^T\int_{\Omega_1} k\nabla\xi\cdot\nabla v dxdt-\int_{\Omega_1}u_0(x)\xi(x,0)dx-\int_0^T\int_{\Omega_1} (v\xi_t+f(v)\xi) dxdt. 
      \end{split} 
\end{equation}

The remainder of the following focuses on the right-hand side of (\ref{eq314}). In the curvilinear coordinates $(s,r)$, due to (\ref{cur}), (\ref{derivative}) and (\ref{eq36}), we have
\begin{equation}\label{RHS}
\begin{split}
     RHS:=&-\int_0^T\int_{\Omega_2}\nabla \psi \cdot A(x)\nabla u dxdt\\
     =&-\int_0^T\int_{\Gamma}\int_0^\delta \left(\sigma  \psi_r u_r+\mu\nabla_s \psi \nabla_s u \right) \left(1+2H(s)r+\kappa(s) r^2\right) drdsdt \\
    =&-\int_0^T\int_{\Gamma}\int_0^\delta \left(\sigma \psi_r u_r+\mu\nabla_\Gamma \psi \nabla_\Gamma u \right) -\int_0^T\int_{\Gamma}\int_0^\delta (\sigma     \psi_r u_r+\mu\nabla_\Gamma \psi \nabla_\Gamma u)(2Hr+\kappa r^2)  \\
    & -\int_0^T\int_{\Gamma}\int_0^\delta \mu (\nabla_s \psi \nabla_s u-\nabla_\Gamma \psi \nabla_\Gamma u)(1+2Hr+\kappa r^2)\\
    = & I+II+III.
\end{split} 
\end{equation}
Firstly, it follows from \eqref{eqno39} and \eqref{eqno36} that 
\begin{equation}\label{fkpp: I}
I = \sqrt{\sigma\mu}\int_0^T\int_{\Gamma}u(s, 0, t) \Psi_R(s, 0)dsdt.
\end{equation}
Secondly, in view of (\ref{eq38}),  (\ref{eqno36}) and Lemma \ref{fkpp: es}, we get
\begin{equation}\label{fkpp: II}
\begin{split}
    |II| = &\left|\int_0^T \int_{\Gamma}\int_0^\delta (\sigma     \psi_r u_r+\mu\nabla_\Gamma \psi \nabla_\Gamma u)(2Hr+\kappa r^2) drdsdt\right|\\
    = & O(\delta) \int_0^T \left( \int_{\Gamma}\int_0^\delta \sigma \psi_r^2 +\mu |\nabla_\Gamma \psi |^2 \right)^{1/2}\left( \int_{\Omega} \sigma u_r^2+\mu |\nabla_\Gamma u|^2 \right)^{1/2}dt\\
    = & O(\delta) \int_0^T \frac{1}{\sqrt{t}}\left(\int_\Gamma \sigma |\psi_r(s,0,t)| \right)^{1/2}dt\\
    = & O(\delta)  \sqrt{T}(\sigma\mu)^{1/4} ||\Psi_R(s,0)||^{1/2}_{L^\infty(\Gamma)},
\end{split}
\end{equation}
where we have used H\"oder inequality. Thirdly, according to \eqref{derivative2}, \eqref{eq38}, \eqref{eqno36} and Lemma \ref{fkpp: es}, it turns out that
\begin{equation}\label{fkpp: III}
\begin{split}
    |III|= & \left|\int_0^T\int_{\Gamma}\int_0^\delta \mu (\nabla_s     \psi \nabla_s u-\nabla_\Gamma \psi \nabla_\Gamma u)(1+2Hr+\kappa r^2) drdsdt\right|\\
    = & O(\delta)\left|\int_0^T\int_{\Gamma}\int_0^\delta \mu \sum_{ij} \psi_{s_i} u_{s_j} drdsdt\right|\\
    = & O(\delta) \int_0^T \left( \int_{\Gamma}\int_0^\delta \sigma \psi_r^2 +\mu |\nabla_\Gamma \psi |^2 \right)^{1/2}\left( \int_{\Omega} \sigma u_r^2+\mu |\nabla_\Gamma u|^2 \right)^{1/2}dt\\
    = & O(\delta)  \sqrt{T}(\sigma\mu)^{1/4} ||\Psi_R(s, 0)||^{1/2}_{L^\infty(\Gamma)}, 
\end{split}
\end{equation}
where the Taylor expansion for $g(s,r)$ and H\"oder inequality were used.

\smallskip

We next investigate the asymptotic behavior of RHS as $\delta \to 0$. To this end, we consider the following cases
$$
(1)  \frac{\sigma}{\delta}\to 0, \quad (2) \frac{\sigma}{\delta}\to \alpha\in (0,\infty),  \quad (3) \frac{\sigma}{\delta}\to \infty.
$$

\noindent
\textbf{Case $1$}. $\frac{\sigma}{\delta}\to 0$ as $\delta \to 0$. 

\smallskip
\noindent 
Subcase $(1i)$. $\sigma\mu \to 0$ as $\delta\to 0$. By the boundedness of $u$, \eqref{eq39} and \eqref{eq310}, \eqref{fkpp: I} can be rewritten as
$$|I| \le C(T) \max\{\frac{\sigma}{\delta}, (\sigma\mu)^{1/2} \}.
$$ 
Similarly, it holds from  \eqref{eq39} and \eqref{eq310} that 
$$ |II| + |III| = O(\delta) \sqrt{T} \max\{\sqrt{\frac{\sigma}{\delta}}, (\sigma\mu)^{1/4} \}.
$$
Combining these with \eqref{RHS}, we get
\begin{equation*}
    \begin{split}
      |RHS| \leq&  | I | + |II| + |III| \to 0 \; \text{ as } \; \delta \to 0,
    \end{split}
\end{equation*}
from which we have $\mathcal{L}[v,\xi]=0$, indicating that $v$ satisfies $\frac{\partial v}{\partial \textbf{n}}=0$ on $\Gamma \times (0, T)$.

\medskip
\noindent 
Subcase $(1ii)$. $\sqrt{\sigma\mu}\to \gamma \in (0, \infty)$ as $\delta\to 0$. In this case, $h\to \infty$. By the weak convergence of $u$, it follows from \eqref{eq315} and \eqref{fkpp: I} that
\begin{equation*}
    \begin{split}
      I  &   \longrightarrow \gamma \int_0^T \int_{\Gamma} v\mathcal{J}^\infty[g] dsdt \;\text{ as } \; \delta \to 0.
    \end{split}
\end{equation*}
Due to (\ref{eq310}), (\ref{fkpp: II}) and (\ref{fkpp: III}), it gives  
$$|II|+|III| \to 0 \; \text{ as } \; \delta\to 0,
$$
from which we are led to
$$ \mathcal{L}[v, \xi] = \gamma \int_0^T \int_{\Gamma}v\mathcal{J}^\infty[\xi].
$$ 
Thus, $v$ satisfies the effective boundary condition 
$k\frac{\partial v}{\partial \textbf{n}}=\gamma \mathcal{J}^\infty[v]$ on $\Gamma \times (0,T)$.

\medskip
\noindent 
Subcase $(1iii)$. $\sigma\mu \to \infty$ as $\delta\to 0$. In this case, $h \to \infty$. Divided both sides of \eqref{eq314} by  $\sqrt{\sigma\mu}$ and sending $\delta\to 0$, we get
\begin{equation*}
\int_0^T\int_{\Gamma}v \mathcal{J}^\infty[g] = 0.
\end{equation*}
Since the range of $\mathcal{J}^\infty[\cdot]$ contains $\{e_n\}_{n=1}^\infty$, it implies that $\nabla_\Gamma v =0$ on $\Gamma$ for almost everywhere $t\in (0, T)$.

Now, we further take the test function $\xi$ satisfying $\xi(s,0, t) = m(t)$ for some smooth function $m(t)$, and then define a new auxiliary function $\psi$ as 
$$\psi(s, r, t)= (1-\frac{r}{\delta}) m(t).
$$
Consequently, direct computation leads to
\begin{equation}\label{fkpp: lin. ext}
\begin{split}
    RHS =& -\int_0^T\int_{\Omega_2}\nabla \psi \cdot A\nabla u dxdt \\
    =&  \int_0^T \frac{\sigma m(t)}{\delta}\left(\int_0^\delta \int_{\Gamma} u_r (1+2Hr+\kappa r^2) dsdr\right)dt \\
    =& \int_0^T \frac{\sigma m(t)}{\delta}\left(\int_{\Gamma}u(s,0,t) ds \right) dt + \int_0^T \frac{\sigma m(t)}{\delta}\int_0^\delta \int_{\Gamma} u (2H+2\kappa r)dsdr \\
   = &  \frac{\sigma }{\delta}\int_0^T m(t)dt \left( O(1) + O(\delta)\|u \|_{L^\infty(Q_T)}\right) \to 0 \; \text{ as } \; \delta \to 0,
\end{split}
\end{equation} 
from which we have $\mathcal{L}[v,\xi]=0$ as $\delta \to 0$. Thus, $v$ satisfies $\int_{\Gamma}\frac{\partial v}{\partial \textbf{n}} ds =0$ on $\Gamma \times (0,T)$ due to the arbitrariness of the test function $\xi$.

\medskip
\noindent
 \textbf{Case $2$}. $\frac{\sigma}{\delta}\to \alpha \in (0, \infty)$ as $\delta\to 0$. 

\noindent 
Subcase $(2i)$. $\sigma\mu \to 0$ as $\delta\to 0$. In this case,  $h \to 0$. It follows from \eqref{eq39} and \eqref{fkpp: I} that
\begin{equation*}
    I \to -\alpha\int_0^T\int_{\Gamma}v\xi \; \text{ as } \delta\to 0.
\end{equation*}
Subsequently, in view of \eqref{eq39}, \eqref{fkpp: II} and \eqref{fkpp: III}, we have $$
| II + III| \to 0 \; \text{ as } \; \delta \to 0,
$$
from which we are led to $\mathcal{L}[v,\xi]=-\alpha\int_0^T\int_{\Gamma}v\xi,$ meaning that $v $ satisfies $k\frac{\partial v}{\partial \textbf{n}}=-\alpha v$ on $\Gamma\times (0,T)$.

\smallskip
\noindent Subcase $(2ii)$. $\sqrt{\sigma\mu}\to \gamma \in (0, \infty)$ as $\delta\to 0$.
In this case, $h \to H = \gamma/\alpha \in (0, \infty)$.  By virtue of (\ref{eq39}) and (\ref{RHS})- (\ref{fkpp: III}), it holds that 
\begin{equation*}
    I \longrightarrow \gamma\int_0^T\int_{\Gamma}v \mathcal{J}^{\gamma/\alpha}[\xi] \quad \text{ and } \quad |II+III| \longrightarrow 0 \text{  as  } \delta \to 0,
\end{equation*}
from which we get $\mathcal{L}[v,\xi]=\gamma\int_0^T\int_{\Gamma}v\mathcal{J}_D^{\gamma/\alpha}[\xi],$ implying that $v$ satisfies  $ k\frac{\partial v}{\partial \textbf{n}}=\gamma \mathcal{J}^{\gamma/\alpha}[v]$ on $\Gamma\times (0,T)$.

\medskip
\noindent Subcase $(2iii)$. $\sigma\mu \to \infty$ as $\delta\to 0$. In this case, $h \to \infty$. Divided both sides of (\ref{eq314}) by  $\sqrt{\sigma\mu}$ and sending $\delta\to 0$, we obtain $$\int_0^T\int_{\Gamma}v\mathcal{J}^\infty[\xi]=0,
$$ 
from which we get $\nabla_\Gamma v =0$ on $\Gamma$ by using the same argument as in Subcase $(1iii)$. Then take the same test function $\xi$ and the same auxiliary function $\psi$ as in Subcase $(1iii)$, and thus it follows from \eqref{fkpp: lin. ext} that
$$ \mathcal{L}[v,\xi]=-\alpha\int_0^T\int_{\Gamma}v\xi,
$$
which means $v$ satisfies  $\int_{\Gamma}\left(k\frac{\partial v}{\partial \textbf{n}}+\alpha v \right)=0$
on $\Gamma\times(0,T)$.

\medskip
\noindent
\textbf{Case $3$}.  $\frac{\sigma}{\delta}\to \infty$ as $\delta\to 0$.

\noindent 
Subcase $(3i)$. $\sqrt{\sigma\mu}\to \gamma \in [0, \infty)$ as $\delta\to 0$. In this case,  $h \to 0$. Divided both sides of (\ref{eq314}) by $\sigma / \delta$ and sending $\delta \to 0$, in view of (\ref{eq38}) and (\ref{RHS})- (\ref{fkpp: III}), we are led to 
\begin{equation*}
 \frac{\delta}{\sigma} I \longrightarrow -\int_0^T\int_{\Gamma}v\xi = 0, 
\end{equation*}
from which $v=0$ on $\Gamma\times(0,T)$ since $\xi$ is arbitrary. 

\smallskip
\noindent 
Subcase $(3ii)$. $\sigma\mu \to \infty$ as $\delta\to 0$. In this case, after passing to a subsequence, we have $h\to H \in[0, \infty]$. If $H=0$, then divided both sides of (\ref{eq314}) by $\sigma / \delta$ and sending $\delta\to 0$, we have  $v=0$ on $\Gamma\times (0, T)$. 

On the other hand, if $H\in (0, \infty]$, then divided both sides of (\ref{eq314}) by $\sqrt{\sigma\mu}$ and sending $\delta \to 0$, we have
\begin{equation*}
 \frac{I}{\sqrt{\sigma\mu}}\longrightarrow \int_0^T\int_{\Gamma}v \mathcal{J}^H[\xi]=0.
\end{equation*}
By using the same argument as in Subcase $(1iii)$, for almost everywhere $t\in (0, T)$, we have $\nabla_\Gamma v = 0$, and
$$
\int_0^T\int_{\Gamma}vm(t)=0,
$$ 
from which we have $v=0$ on $\Gamma\times(0,T)$ for $m(t)$ is an arbitrary function in $t$.

\smallskip
Therefore, we complete the whole proof of Theorem \ref{fkpp: thm1}.
\end{proof}

\section{Lifespan of EBCs}\label{sec: lifespan}

In this section, we delve into the lifespan of each EBC in Table \ref{fkpp: tb} to determinate how long these EBCs remain effective. Moreover, we prove that the lifespan of each EBC is infinite, which means that the convergence of $u(\cdot, t) \to v(\cdot, t)$ in $L^2(\Omega_1)$ is global in $t$.

\subsection{Eigenvalue problems}

Consider the eigenvalue problem of
\begin{equation} \label{EP}
    \left\{
             \begin{array}{llr}
             -\nabla \cdot (A(x)\nabla u) = \lambda u, &\mbox{$x \in \Omega,$}\\
             u=0,     &\mbox{$ x \in \partial \Omega,$}& \\
             \end{array}
  \right.
\end{equation}
where $A(x)$ is given in \eqref{A} and \eqref{OAC}. Let $( \lambda_1^\delta, e_1^\delta )$ be an eigenpair of \eqref{EP} such that $e_1^\delta > 0$ in $\Omega$ and $||e_1^\delta ||_{L^2(\Omega)} = 1$. It is well-known that 
\begin{equation}\label{fkpp: eq42}
  \lambda_1^\delta =   \underset{0 \not\equiv u \in H^1_0(\Omega)}{\inf} \frac{\int_\Omega A \nabla u \cdot \nabla u dx}{\int_\Omega u^2 dx}.
\end{equation} 

\begin{lemma}\label{fkpp: le41}
Suppose that $\frac{\sigma}{\delta} \to 0$ as $\delta \to 0$. Then, for any small $\delta > 0$, it holds,
$$ C_1 \frac{\sigma}{\delta} \leq \lambda_1^\delta \leq C_2 \frac{\sigma}{\delta},
$$
and 
$$\int_{\Omega_1}\left( e_1^\delta - \frac{1}{\sqrt{|\Omega_1|}}\right)^2 dx \leq C \left( \frac{\sigma}{\delta} + \delta^2 \right),
$$
where $C_1, C_2$ and $C$ are positive constants independent of $\delta$.
\end{lemma}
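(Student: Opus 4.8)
The plan is to exploit the variational characterization \eqref{fkpp: eq42} for the upper and lower bounds on $\lambda_1^\delta$, and then an energy/orthogonality argument for the closeness of $e_1^\delta$ to a constant on $\Omega_1$. For the upper bound, I would plug into the Rayleigh quotient a fixed competitor: take $u$ equal to the constant $1$ on $\Omega_1$ and interpolated linearly (in the normal variable $r$) down to $0$ on $\partial\Omega$, i.e. $u = 1$ in $\Omega_1$ and $u = 1 - r/\delta$ in $\Omega_2$. Using the curvilinear decomposition \eqref{cur}–\eqref{eq36}, $A\nabla u\cdot\nabla u = \sigma u_r^2 = \sigma/\delta^2$ in the coating, so the numerator is $\int_{\Omega_2}\sigma\delta^{-2}(1+2Hr+\kappa r^2)\,dx = O(1)\,\sigma\delta^{-2}\cdot(\delta|\Gamma|) = O(1)\sigma/\delta$, while the denominator is $\geq |\Omega_1|$; this gives $\lambda_1^\delta \le C_2\sigma/\delta$.

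For the lower bound, I would argue that $\lambda_1^\delta$ cannot be too small. Let $u = e_1^\delta$ be the normalized eigenfunction. Write $\bar u := |\Omega_1|^{-1}\int_{\Omega_1} u\,dx$ and $w := u - \bar u$. On $\Omega_1$, Poincaré–Wirtinger gives $\int_{\Omega_1} w^2 \le C_P \int_{\Omega_1}|\nabla u|^2$, and since $A = kI$ there, $k\int_{\Omega_1}|\nabla u|^2 \le \int_\Omega A\nabla u\cdot\nabla u = \lambda_1^\delta$. Also the trace of $u$ on $\Gamma$: the Poincaré-type inequality on $\Omega_1$ controls $\|u - \bar u\|_{L^2(\Gamma)}^2 \le C\int_{\Omega_1}|\nabla u|^2 \le C\lambda_1^\delta/k$. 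In the coating, $u$ must drop from roughly $\bar u$ (its value near $\Gamma$) to $0$ on $\partial\Omega$ over a normal distance $\delta$; integrating the one-dimensional Cauchy–Schwarz estimate $\bar u^2 \lesssim (\text{trace defect})^2 + \delta\int_0^\delta \sigma u_r^2\,dr / \sigma$ along $\Gamma$ yields $|\Gamma|\,\bar u^2 \le C\lambda_1^\delta/k + C\delta\sigma^{-1}\int_{\Omega_2}\sigma u_r^2\,dx \le C\lambda_1^\delta(1 + \delta/\sigma)\cdot O(1)$. Meanwhile $L^2$-normalization forces $\int_{\Omega_1}u^2 = |\Omega_1|\bar u^2 + \int_{\Omega_1}w^2$ to be bounded below (it equals $1 - \int_{\Omega_2}u^2 \geq 1 - C\lambda_1^\delta/\sigma\cdot\delta \to 1$, using $u$ small in $\Omega_2$). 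Combining, $1 - o(1) \le C\lambda_1^\delta(1+\delta/\sigma) + C_P\lambda_1^\delta/k$, and since $\sigma/\delta\to 0$ means $\delta/\sigma$ dominates, this rearranges to $\lambda_1^\delta \ge C_1\sigma/\delta$.

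The second estimate then follows by bookkeeping the same quantities with the now-established two-sided bound $\lambda_1^\delta \asymp \sigma/\delta$. Indeed $\int_{\Omega_1}w^2 \le C_P\lambda_1^\delta/k \le C\sigma/\delta$, and $\bar u$ must be close to $|\Omega_1|^{-1/2}$: from $1 = \int_\Omega (e_1^\delta)^2 = |\Omega_1|\bar u^2 + \int_{\Omega_1}w^2 + \int_{\Omega_2}(e_1^\delta)^2$ with $\int_{\Omega_1}w^2 = O(\sigma/\delta)$ and $\int_{\Omega_2}(e_1^\delta)^2 = O(\delta\cdot \text{(sup of }e_1^\delta\text{ on }\Omega_2)^2)$, one gets $\big|\bar u^2 - |\Omega_1|^{-1}\big| \le C(\sigma/\delta + \delta^2)$, hence $\big|\bar u - |\Omega_1|^{-1/2}\big| \le C(\sigma/\delta+\delta^2)$; then
\[
\int_{\Omega_1}\Big(e_1^\delta - \tfrac{1}{\sqrt{|\Omega_1|}}\Big)^2 dx \le 2\int_{\Omega_1}w^2\,dx + 2|\Omega_1|\Big(\bar u - \tfrac{1}{\sqrt{|\Omega_1|}}\Big)^2 \le C\Big(\tfrac{\sigma}{\delta} + \delta^2\Big).
\]
I expect the main obstacle to be the lower bound on $\lambda_1^\delta$: making rigorous the claim that the eigenfunction's mass is essentially concentrated in $\Omega_1$ with value near the constant $\bar u$, and correctly tracking how the trace defect on $\Gamma$ and the normal-drop term in $\Omega_2$ interact, requires care with the trace/Poincaré constants on $\Omega_1$ (which are $\delta$-independent since $\Omega_1$ is fixed) and with the $1+2Hr+\kappa r^2$ Jacobian factor — but all of these are uniformly controlled for small $\delta$, so the estimates close.
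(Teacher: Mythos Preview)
The paper does not give its own proof of this lemma; it simply cites \cite[Lemma 2.1]{LLW2022} and omits all details. Your variational outline is the standard approach (and almost certainly the one in the cited reference): test function $1$ in $\Omega_1$ with linear drop in $\Omega_2$ for the upper bound, Poincar\'e--Wirtinger on $\Omega_1$ plus a one-dimensional Cauchy--Schwarz across the coating for the lower bound, then bookkeeping for the eigenfunction estimate. The overall strategy is sound.

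Two small imprecisions in your second-estimate bookkeeping are worth flagging. First, you invoke ``$\int_{\Omega_2}(e_1^\delta)^2 = O(\delta\cdot(\sup e_1^\delta)^2)$'', but no uniform $L^\infty$ bound on $e_1^\delta$ has been established. Instead, use the same normal-direction Poincar\'e estimate the paper records in \eqref{fkpp: eq46}: from $e_1^\delta(s,\delta)=0$ and Cauchy--Schwarz in $r$ one gets $\int_{\Omega_2}(e_1^\delta)^2 \le C\delta^2\sigma^{-1}\int_{\Omega_2}\sigma(e_1^\delta)_r^2 \le C\delta^2\sigma^{-1}\lambda_1^\delta \le C\delta$, using only the already-proved upper bound $\lambda_1^\delta\le C_2\sigma/\delta$. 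Second, this yields $\big|\bar u^2 - |\Omega_1|^{-1}\big| \le C(\sigma/\delta + \delta)$, not $C(\sigma/\delta+\delta^2)$ as you wrote; the $\delta^2$ only emerges \emph{after} you pass to $\big|\bar u - |\Omega_1|^{-1/2}\big| \le C(\sigma/\delta+\delta)$ and square, since $(\sigma/\delta+\delta)^2 \le 2(\sigma/\delta)^2 + 2\delta^2 \le C(\sigma/\delta+\delta^2)$ for small $\sigma/\delta$. With these two corrections your argument closes exactly as intended.
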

\begin{proof}
The can be found in \cite[Lemma 2.1]{LLW2022}, and hence we omit the details.
\end{proof}

\begin{theorem}\label{fkpp: thm3}
Suppose $A(x)$ is given in \eqref{A} and \eqref{OAC}. Assume that $\sigma$ and $\mu$ satisfy 
\begin{equation*}
    \lim_{\delta\to 0}\sigma\mu=\gamma\in[0,\infty], \quad
\lim_{\delta\to 0}\frac{\sigma}{\delta}=\alpha\in[0,\infty].
\end{equation*}
Then, as $\delta \to 0$,
$$
\lambda_1^\delta \to \lambda_1 \quad \text{ and } \quad e_1^\delta \to e_1 \quad \text{ in } L^2(\Omega_1),
$$
where $e_1 > 0$ with $||e_1||_{L^2(\Omega_1)} = 1$, and $(\lambda_1, e_1)$ is the principal eigenpair of
\begin{equation}\label{fkpp: ep2}
- k\Delta v = \lambda v, \quad  x \in \Omega_1
\end{equation}
subject to the effective boundary conditions listed in Table \ref{fkpp: tb}.
\end{theorem}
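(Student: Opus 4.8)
The plan is to prove convergence of the principal eigenpair by a variational/compactness argument analogous to Step 1 of Theorem \ref{fkpp: thm1}, using the Rayleigh quotient characterization \eqref{fkpp: eq42}. First I would establish uniform bounds: using the test function $1/\sqrt{|\Omega_1|}$ extended suitably into the coating (e.g., harmonically via $\psi$ from \eqref{AF}, or linearly as in \eqref{fkpp: lin. ext}), I obtain an upper bound for $\lambda_1^\delta$ that stays bounded as $\delta \to 0$ in the cases where $\alpha < \infty$ and $\gamma < \infty$; in the remaining cases (Dirichlet limit) one needs instead to show $\lambda_1^\delta$ converges to the Dirichlet principal eigenvalue $\lambda_1$ of $-k\Delta$ on $\Omega_1$. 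Together with $\lambda_1^\delta > 0$ and $\|e_1^\delta\|_{L^2(\Omega)} = 1$, the energy estimate $\int_\Omega A\nabla e_1^\delta \cdot \nabla e_1^\delta\,dx = \lambda_1^\delta$ gives a uniform $H^1(\Omega_1)$ bound on $e_1^\delta$ (since $A = kI$ on $\Omega_1$), plus control on $\sigma\|\partial_r e_1^\delta\|^2 + \mu\|\nabla_\Gamma e_1^\delta\|^2$ in the coating.

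Next, by Banach--Eberlein and the compact embedding $H^1(\Omega_1) \hookrightarrow L^2(\Omega_1)$, pass to a subsequence so that $e_1^\delta \rightharpoonup e_1$ weakly in $H^1(\Omega_1)$, strongly in $L^2(\Omega_1)$, and $\lambda_1^\delta \to \lambda_*$ for some $\lambda_* \in [0,\infty)$ (the boundedness of $\lambda_1^\delta$ being part of what must be checked case by case). Then I would identify the limit: testing the weak form of \eqref{EP} against the harmonic extension $\overline{\xi}$ of a test function $\xi \in C^\infty(\overline{\Omega}_1)$ and repeating verbatim the coating analysis from the proof of Theorem \ref{fkpp: thm1} — the decomposition $RHS = I + II + III$ in \eqref{RHS}, the estimates \eqref{fkpp: I}--\eqref{fkpp: III}, and the case distinctions $(1i)$--$(3ii)$ — shows that $e_1$ satisfies $-k\Delta e_1 = \lambda_* e_1$ on $\Omega_1$ weakly, subject to exactly the effective boundary condition in Table \ref{fkpp: tb} corresponding to the given regime. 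Since $e_1^\delta > 0$ and $\|e_1^\delta\|_{L^2(\Omega)} = 1$ with $\|e_1^\delta\|_{L^2(\Omega_2)} \to 0$ (a consequence of the trace/coating estimates and $\delta \to 0$), the limit satisfies $e_1 \geq 0$ and $\|e_1\|_{L^2(\Omega_1)} = 1$, so $e_1 \not\equiv 0$; by the strong maximum principle (or the simplicity of the principal eigenvalue for each limiting boundary-value problem, cf. Theorem \ref{Uni}) $e_1 > 0$ and hence $(\lambda_*, e_1)$ is \emph{the} principal eigenpair, i.e. $\lambda_* = \lambda_1$. Uniqueness of the limit then removes the need to pass to a subsequence.

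The main obstacle I anticipate is the Dirichlet-limit cases (the third column $\sigma/\delta \to \infty$, and more subtly the case $\sigma\mu \to \infty$ with $h \to \infty$, Subcases $(1iii)$, $(2iii)$, $(3i)$, $(3ii)$): here one cannot simply produce an $O(1)$ upper bound for $\lambda_1^\delta$ from a fixed test function, since the limiting operator has Dirichlet data and $\lambda_1$ is strictly larger. One must show $\lambda_1^\delta \to \lambda_1^{\text{Dir}}(\Omega_1)$ directly — the upper bound via approximating a Dirichlet eigenfunction of $\Omega_1$ (extended by zero, or cut off near $\Gamma$) and the lower bound by showing any weak subsequential limit $e_1$ has vanishing trace on $\Gamma$, which is precisely what the rescaled identities \eqref{eq39}--\eqref{eq310} deliver when divided by the appropriate blowing-up prefactor. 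A second, milder technical point is justifying that the $II, III$ remainder terms from \eqref{fkpp: II}--\eqref{fkpp: III}, which in the parabolic setting were controlled using the factor $1/\sqrt t$ from Lemma \ref{fkpp: es}(ii), are here controlled instead directly by $\lambda_1^\delta$-boundedness and the coating energy bound — this requires only replacing the time-weighted estimate by the stationary one and is routine. Finally, Lemma \ref{fkpp: le41} handles the borderline rate $\sigma/\delta \to 0$ quantitatively and can be invoked to pin down $e_1 = 1/\sqrt{|\Omega_1|}$ and $\lambda_1 = 0$ in the Neumann regime, consistent with the general argument.
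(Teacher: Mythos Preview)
Your proposal is correct and follows the paper's overall structure (compactness via Rayleigh quotient, identification of the limit by repeating the coating analysis of Theorem~\ref{fkpp: thm1}, then uniqueness of the principal eigenpair). The one place where the paper is substantially simpler is the uniform upper bound for $\lambda_1^\delta$: instead of your case-dependent test functions (constant extended linearly/harmonically when $\alpha<\infty$, approximated Dirichlet eigenfunction when $\alpha=\infty$), the paper takes the Dirichlet principal eigenfunction $e_1^D$ of $-k\Delta$ on $\Omega_1$ and extends it by zero into $\Omega_2$. Since $e_1^D\in H^1_0(\Omega_1)$, this extension lies in $H^1_0(\Omega)$ and its energy is exactly $k\int_{\Omega_1}|\nabla e_1^D|^2=\lambda_1^D$, independent of $\sigma,\mu,\delta$; hence $\lambda_1^\delta<\lambda_1^D$ in \emph{all} regimes, and your ``main obstacle'' paragraph evaporates. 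In particular there is no obstacle in Subcases $(1iii)$ and $(2iii)$ --- these are not Dirichlet-limit cases (the limiting condition is $\nabla_\Gamma v=0$ with an integral constraint), and your constant test function already handles them. The paper then uses this uniform bound plus Lemma~\ref{fkpp: le41} (for the regime $\sigma/\delta\to 0$, where $\delta^2/\sigma$ may blow up) to show $\|e_1^\delta\|_{L^2(\Omega_2)}\to 0$ via the Poincar\'e-type inequality \eqref{fkpp: eq46}; this is the precise role of Lemma~\ref{fkpp: le41} here, rather than merely ``pinning down'' the Neumann limit.
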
\label{fkpp: ep3}

\begin{proof}[Proof of Theorem \ref{fkpp: thm3}]
Denote by $(\lambda_1^D, e_1^D)$ the principal eigenpair of \eqref{fkpp: ep2} subject to the Dirichlet boundary condition. Taking a special test function $\phi \in H^1_0(\Omega)$ in \eqref{fkpp: eq42} as
\begin{equation} 
    \phi = \left\{
             \begin{array}{llr}
             e_1^D, &\mbox{$x \in \Omega_1,$}\\
             0,     &\mbox{$ x \in\Omega_2,$}& 
             \end{array}
  \right.
\end{equation}
we are led to  
$$
\lambda_1^\delta < \frac{k\int_{\Omega_1}  |\nabla e_1^D|^2  dx}{\int_{\Omega_1} (e_1^D)^2 dx} = \lambda_1^D.
$$
Moreover, it holds from \eqref{fkpp: eq42} that $\lambda_1^\delta > 0$. Then after passing to a subsequence of $\delta > 0$, we have $$\lambda_1^\delta \to \; \text{ some } \; \lambda_1^0.
$$ 
Subsequently, it is easy to see that
\begin{equation}\label{fkpp: eq45}
   \lambda_1^\delta =   k\int_{\Omega_1} |\nabla e_1^\delta|^2 dx + \int_{\Omega_2} \sigma  (e_1^\delta)_r^2 + \mu |\nabla_s e_1^\delta|^2  dx \le \lambda_1^D,
\end{equation} 
from which $\{e_1^\delta\}_{\delta > 0}$ is bounded in $H^1(\Omega_1)$.
Thus, after passing to a further subsequence of $\delta \to 0$, we have 
$$e_1^\delta \to \; \text{some} \; e_1^0
$$ 
weakly in $H^1(\Omega_1)$ and strongly in $L^2(\Omega_1)$. 

Consequently, similar to the parabolic case in the proof of Theorem \ref{fkpp: thm1}, we can show that $e_1^0$ is a weak solution of \eqref{fkpp: ep2} subject to the effective boundary conditions in Table \ref{fkpp: tb} with the corresponding $\lambda_1^0$. We next prove that $e_1^0 \not\equiv 0$, after which it holds that $e^0_1$ is the eigenfunction of $\lambda_1^0$. 

In view of \eqref{cur}, we get
\begin{equation}\label{fkpp: eq46}
 \begin{split}
    \int_{\Omega_2} (e_1^\delta)^2 dx = &\int_{\Gamma}\int_0^\delta (e_1^\delta)^2 (1+H(s)r+\kappa(s)r^2)drds\\
    = & \int_{\Gamma}\int_0^\delta \left(\int_r^\delta (e_1^\delta)_\tau  d\tau \right)^2 (1+H(s)r+\kappa(s)r^2)drds\\
    \le &C \delta \int_{\Gamma} \int_0^\delta \left(\int_0^\delta (e_1^\delta)^2_\tau  d\tau \right) (1+H(s)r+\kappa(s)r^2)drds\\
    \le & C\delta^2 \int_{\Gamma} \int_0^\delta (e_1^\delta)^2_r drds,
\end{split}
\end{equation}
from which, together with \eqref{fkpp: eq45}, we have 
$$\int_{\Omega_2} (e_1^\delta)^2 dx \le C\frac{\delta^2}{\sigma} \to 0 \; \text{ as } \; \delta \to 0,
$$
in the case of $\frac{\sigma}{\delta} \to \alpha \in (0, \infty]$. Combining \eqref{fkpp: eq45} and \eqref{fkpp: eq46} with Lemma \ref{fkpp: le41}, as $\delta \to 0$, we have $$\int_{\Omega_2} (e_1^\delta)^2 dx \le C\delta \to 0
$$ in the case of $\frac{\sigma}{\delta} \to 0$. Thus, we obtain $e_1^0 \not\equiv 0$ and $||e_1^0||_{L^2(\Omega_1)} = 1$, resulting from
$$1 = \int_\Omega (e_1^\delta)^2 dx = \int_{\Omega_1} (e_1^\delta)^2 dx + \int_{\Omega_2} (e_1^\delta)^2 dx.
$$
Consequently, by the uniqueness of the principal eigenpair, it follows that $(\lambda_1, e_1) = (\lambda_1^0, e_1^0)$ and all above convergences hold without passing to a subsequence.
\end{proof}
We next consider the eigenvalue problems of the operator $-k\Delta$ subject to some nonlocal boundary conditions in Table \ref{fkpp: tb}.
\begin{lemma} \label{fkpp: le42}
(i) Consider the eigenvalue problem of
\begin{equation} \label{EP2}
    \left\{
             \begin{array}{llr}
             -k \Delta v = \mu v, &\mbox{$x \in \Omega_1,$}\\
             k\frac{\partial v}{\partial \textbf{n}} =\gamma \mathcal{J}^{\infty}[v],     &\mbox{$ x \in \partial \Omega_1,$}& \\
             \end{array}
  \right.
\end{equation}
where $\alpha = 0$ and $\gamma \in (0, \infty)$. Then, the principle eigenvalue of \eqref{EP2} is $0$, and the second eigenvalue is positive.

(ii) Consider the eigenvalue problem of
\begin{equation} \label{EP4}
    \left\{
             \begin{array}{llr}
             -k \Delta v = \mu v, &\mbox{$x \in \Omega_1,$}\\
            \nabla_\Gamma v =0, \;\int_{\partial\Omega_1} \frac{\partial v}{\partial \textbf{n}} ds = 0,     &\mbox{$ x \in \partial \Omega_1,$}& \\
             \end{array}
  \right.
\end{equation}
where $\alpha = 0$ and $\gamma = \infty$. Then, the principle eigenvalue of \eqref{EP4} is $0$, and the second eigenvalue is positive.

(iii) Consider the eigenvalue problem of
\begin{equation} \label{EP1}
    \left\{
             \begin{array}{llr}
             -k \Delta v = \mu v, &\mbox{$x \in \Omega_1,$}\\
             k\frac{\partial v}{\partial \textbf{n}} =\gamma \mathcal{J}^{\gamma/\alpha}[v],     &\mbox{$ x \in \partial \Omega_1,$}& \\
             \end{array}
  \right.
\end{equation}
where $\alpha \in (0, \infty)$ and $\gamma \in (0, \infty)$. Then, the principle eigenvalue of \eqref{EP1} is positive.

(iv) Consider the eigenvalue problem of
\begin{equation} \label{EP3}
    \left\{
             \begin{array}{llr}
             -k \Delta v = \mu v, &\mbox{$x \in \Omega_1,$}\\
             \nabla_\Gamma v =0, \; \int_{\partial \Omega_1}\left( k \frac{\partial v}{\partial \textbf{n}} +\alpha v\right) ds = 0,     &\mbox{$ x \in \partial \Omega_1,$}& \\
             \end{array}
  \right.
\end{equation}
where $\alpha \in (0, \infty)$ and $\gamma = \infty$ . Then, the principle eigenvalue of \eqref{EP3} is positive.
\end{lemma}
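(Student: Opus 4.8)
The plan is to handle all four eigenvalue problems at once through the Rayleigh quotient characterization of the principal eigenvalue, so that each assertion reduces to two facts about an associated symmetric bilinear form $B$: its non-negativity, and an explicit description of the set on which it vanishes. For (i) and (iii) I would multiply $-k\Delta v=\mu v$ by $w\in H^1(\Omega_1)$, integrate by parts over $\Omega_1$, and substitute the boundary condition $k\frac{\partial v}{\partial\textbf{n}}=\gamma\mathcal{J}^H[v]$ (with $H=\infty$ in (i) and $H=\gamma/\alpha\in(0,\infty)$ in (iii)); using the symmetry of $\mathcal{J}^H$ recorded in the Remark after \eqref{eq310}, this produces the symmetric form
\[
B_H(v,w)=\int_{\Omega_1}k\,\nabla v\cdot\nabla w\,dx-\gamma\int_\Gamma w\,\mathcal{J}^H[v]\,ds,\qquad v,w\in H^1(\Omega_1).
\]
For (ii) and (iv) the admissible functions lie in the closed subspace $\mathcal{H}:=\{v\in H^1(\Omega_1):v|_\Gamma\text{ is constant}\}$, on which the constraint $\int_\Gamma\big(k\frac{\partial v}{\partial\textbf{n}}+\alpha v\big)\,ds=0$ turns the boundary integral into $\alpha\int_\Gamma vw\,ds$, giving $B_0^\alpha(v,w)=\int_{\Omega_1}k\,\nabla v\cdot\nabla w\,dx+\alpha\int_\Gamma v\,w\,ds$ on $\mathcal{H}$ (with $\alpha=0$ in (ii)). In each case the principal eigenvalue is $\mu_1=\inf\{B(v,v)/\|v\|_{L^2(\Omega_1)}^2:v\neq 0\}$ over the relevant space; since $B(v,v)+\|v\|_{L^2(\Omega_1)}^2$ is comparable to $\|v\|_{H^1(\Omega_1)}^2$ there and the space embeds compactly into $L^2(\Omega_1)$, the spectrum is discrete and the min-max values are attained.

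The structural heart of the matter is the dichotomy between $\mathcal{J}^\infty$ and $\mathcal{J}^H$ for finite $H$: one has $-\mathcal{J}^\infty=(-\Delta_\Gamma)^{1/2}\ge 0$ with kernel exactly the constants, whereas for $0<H<\infty$ the operator $-\mathcal{J}^H$ is \emph{positive definite}. For the latter I would use the energy identity
\[
\langle-\mathcal{J}^H[g],g\rangle_{L^2(\Gamma)}=\int_\Gamma\int_0^H\big(\Psi_R^2+|\nabla_\Gamma\Psi|^2\big)\,dR\,ds,
\]
where $\Psi$ is the bounded solution of $\Psi_{RR}+\Delta_\Gamma\Psi=0$ on $\Gamma\times(0,H)$ with $\Psi(\cdot,0)=g$ and $\Psi(\cdot,H)=0$ (cf. \eqref{rescale}); the identity follows by multiplying that equation by $\Psi$ and integrating by parts (as in \eqref{eq38}), and its right-hand side vanishes only if $\Psi\equiv 0$, which forces $g=0$. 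This matches \eqref{eq315}: the mean $\overline{g}$ of $g$ contributes $|\Gamma|\,\overline{g}^{\,2}/H>0$ via $\sqrt{\lambda}/\tanh(\sqrt{\lambda}H)\to 1/H$ as $\lambda\to 0$, while on the orthogonal complement of the constants $-\mathcal{J}^H$ still behaves like $(-\Delta_\Gamma)^{1/2}$. Writing $v$ also for its trace on $\Gamma$, it follows that $B_\infty(v,v)=\int_{\Omega_1}k|\nabla v|^2\,dx+\gamma\|(-\Delta_\Gamma)^{1/4}v\|_{L^2(\Gamma)}^2$, $B_H(v,v)=\int_{\Omega_1}k|\nabla v|^2\,dx+\gamma\langle-\mathcal{J}^H[v],v\rangle$ and $B_0^\alpha(v,v)=\int_{\Omega_1}k|\nabla v|^2\,dx+\alpha\int_\Gamma v^2\,ds$ are all $\ge 0$, and that $B(v,v)=0$ forces $\nabla v\equiv 0$ in $\Omega_1$ (hence $v$ constant, since $\Omega_1$ is connected), together with $v|_\Gamma=0$ in the cases $B_H$ with $H<\infty$ and $B_0^\alpha$ with $\alpha>0$, but no further restriction for $B_\infty$ or $B_0^0$.

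The conclusions then follow at once. For (i) and (ii) the null set of $B$ is precisely the one-dimensional space of constants, so $\mu_1=0$ with a positive constant eigenfunction; the second eigenvalue is $\mu_2=\inf\{B(v,v)/\|v\|_{L^2(\Omega_1)}^2:\int_{\Omega_1}v\,dx=0\}$, and $\mu_2=0$ is impossible because along a minimizing sequence the gradients tend to $0$ in $L^2(\Omega_1)$, so a subsequence converges strongly in $H^1(\Omega_1)$ to a nonzero constant of zero mean --- a contradiction --- whence $\mu_2>0$. For (iii) and (iv) the null set of $B$ is $\{0\}$, because $v$ constant together with $v|_\Gamma=0$ gives $v\equiv 0$; the same compactness argument applied to a minimizing sequence for $\mu_1=0$ would produce a nonzero constant $v$ whose trace $v|_\Gamma$ nevertheless vanishes, a contradiction, so $\mu_1>0$.

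I expect the delicate step to be the positive-definiteness of $-\mathcal{J}^H$ for finite $H$, specifically its action on the constant mode, where the series \eqref{eq37} degenerates; this is exactly the point that separates (iii),(iv) (where $\mu_1>0$) from (i),(ii) (where $\mu_1=0$), so it must be argued carefully --- most cleanly through the energy identity above, or by exhibiting the explicit profile $\Psi(s,R)=\overline{g}\,(1-R/H)$ on the constant part of $g$. The remaining bookkeeping --- closedness of the forms on $H^1(\Omega_1)$ and on $\mathcal{H}$, the compact embedding into $L^2(\Omega_1)$, and the equivalence of the weak eigenvalue formulations (analogues of Definition \ref{def1}) with the bilinear forms above --- is routine, and I would only sketch it.
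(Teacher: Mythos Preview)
Your proposal is correct and self-contained. The paper itself does not prove this lemma: its entire proof reads ``The proof of this Lemma can be found in \cite[Lemma 2.2-2.3]{LLW2022}, and hence we omit the details.'' So there is no argument in the paper to compare against; you have supplied what the paper outsources.

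Your route --- variational characterization via the symmetric bilinear forms $B_H$ and $B_0^\alpha$, non-negativity, identification of the null cone, and compactness to force strict positivity where the null cone is trivial --- is the natural one, and your handling of the key distinction between $H=\infty$ and $H<\infty$ is exactly right. The energy identity $\langle -\mathcal{J}^H[g],g\rangle=\int_\Gamma\int_0^H(\Psi_R^2+|\nabla_\Gamma\Psi|^2)\,dR\,ds$ together with the explicit constant-mode profile $\Psi=\overline{g}(1-R/H)$ nails the positive-definiteness of $-\mathcal{J}^H$ for finite $H$, which is indeed the crux separating (iii),(iv) from (i),(ii). One small tightening: in your $\mu_2>0$ argument you say the minimizing sequence ``converges strongly in $H^1(\Omega_1)$''; what you actually get first is weak $H^1$ and strong $L^2$ convergence, and then $\|\nabla v_n\|_{L^2}\to 0$ upgrades this to strong $H^1$ convergence to a constant --- worth saying explicitly, since the trace continuity needed to pass the boundary term to the limit relies on it.
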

\begin{proof}
The proof of this Lemma can be found in \cite[Lemma 2.2-2.3]{LLW2022}, and hence we omit the details.
\end{proof}
 
We now turn to the positive steady state of \eqref{fkpp: PDE} as
\begin{equation} \label{fkpp: ps1}
    \left\{
             \begin{array}{llr}
             -\nabla \cdot (A(x)\nabla U) = U(1 - U), &\mbox{$x \in \Omega,$}\\
             U=0,     &\mbox{$ x \in \partial \Omega,$}& \\
             \end{array}
  \right.
\end{equation}
where $U$ is the unique positive solution. It is well-known that $(i)$ the positive steady state $U$ exists if and only if the principal eigenvalue $\lambda_1^\delta < 1$; $(ii)$ if $U$ exists, then it is unique; $(iii)$ as $t \to \infty$, $u \to U$ if $U$ exists ($u \to 0$ if $U$ does not exist).

\begin{theorem}\label{fkpp: prop}
Suppose that $A(x)$ is given in \eqref{A} and \eqref{OAC}. Assume that $\sigma$ and $\mu$ satisfy 
\begin{equation*}
    \lim_{\delta\to 0}\sigma\mu=\gamma\in[0,\infty], \quad
\lim_{\delta\to 0}\frac{\sigma}{\delta}=\alpha\in[0,\infty].
\end{equation*}
Then, as $\delta \to 0$, $U \to V$ in $L^2(\Omega_1)$, where $V$ is the unique positive solution of 
\begin{equation} \label{fkpp: ps2}
- k\Delta V = V(1 - V), \;  x \in \Omega_1
\end{equation}
subject to the effective boundary conditions listed in Table \ref{fkpp: tb} (with $v$ replaced by $V$).
\end{theorem}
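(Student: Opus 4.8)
The plan is to prove Theorem~\ref{fkpp: prop} by combining three ingredients: uniform bounds and compactness for the family $\{U\}_{\delta>0}$, the passage-to-the-limit machinery already developed in the proof of Theorem~\ref{fkpp: thm1} (which identifies which effective boundary condition the limit satisfies), and the uniqueness of the positive solution $V$ of the effective problem \eqref{fkpp: ps2}. First I would record the a priori bounds: since $0$ and $1$ are a pair of ordered sub/supersolutions of \eqref{fkpp: ps1}, one has $0\le U\le 1$ on $\Omega$, uniformly in $\delta$. Testing \eqref{fkpp: ps1} with $U$ itself gives
\begin{equation*}
\int_{\Omega_1} k|\nabla U|^2\,dx + \int_{\Omega_2}\bigl(\sigma U_r^2 + \mu|\nabla_s U|^2\bigr)\,dx = \int_{\Omega} U^2(1-U)\,dx \le |\Omega|,
\end{equation*}
so $\{U\}_{\delta>0}$ is bounded in $H^1(\Omega_1)$ and the coating Dirichlet energy $\int_{\Omega_2}(\sigma U_r^2+\mu|\nabla_s U|^2)$ is $O(1)$; in particular $\{U|_{\Omega_1}\}$ is precompact in $L^2(\Omega_1)$ and, along a subsequence, $U\to$ some $V$ weakly in $H^1(\Omega_1)$ and strongly in $L^2(\Omega_1)$.

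Next I would pass to the limit in the weak formulation. For a test function $\xi\in C^\infty(\overline\Omega_1)$ I extend it to $\overline\Omega$ by the harmonic extension $\overline\xi=\psi$ in $\Omega_2$ exactly as in Section~\ref{sec: EBCs} (with $g=\xi|_\Gamma$, and, in the $\sigma\mu\to\infty$ subcases, the linear extension $\psi=(1-r/\delta)m$ for test functions constant on $\Gamma$). The weak form of \eqref{fkpp: ps1} reads
\begin{equation*}
\int_{\Omega_1} k\nabla U\cdot\nabla\xi\,dx + \int_{\Omega_2}\nabla\psi\cdot A(x)\nabla U\,dx = \int_{\Omega} U(1-U)\,\overline\xi\,dx,
\end{equation*}
and the coating term $\int_{\Omega_2}\nabla\psi\cdot A\nabla U$ is handled by the identical splitting $I+II+III$ of \eqref{RHS}: the estimates \eqref{fkpp: II}--\eqref{fkpp: III} only used the uniform bound on the coating Dirichlet energy and on $\|U\|_{L^\infty}$, both of which we have here (with the factor $1/\sqrt t$ simply absent in the elliptic case), so $II,III\to0$, while $I$ converges, according to which of the nine regimes we are in, to the appropriate boundary functional ($0$; $-\alpha\int_\Gamma V\xi$; $\gamma\int_\Gamma V\,\mathcal J^{h}[\xi]$; or the constraints $\nabla_\Gamma V=0$ together with an integrated flux condition, obtained after dividing by $\sigma/\delta$ or by $\sqrt{\sigma\mu}$). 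The right-hand side converges to $\int_{\Omega_1}V(1-V)\xi\,dx$ by strong $L^2$ convergence of $U$ and the uniform bound $0\le U\le1$. Hence $V$ is a weak solution of \eqref{fkpp: ps2} subject to the matching EBC from Table~\ref{fkpp: tb}.

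It remains to show $V\not\equiv0$ (so that $V$ is genuinely the positive solution) and then to invoke uniqueness. Nonvanishing follows from the characterization that the positive steady state exists iff the principal eigenvalue is below $1$: by Theorem~\ref{fkpp: thm3}, $\lambda_1^\delta\to\lambda_1$, the principal eigenvalue of the effective problem; and since $U$ exists for all small $\delta$ (given in the hypothesis, or equivalently $\lambda_1^\delta<1$), we get $\lambda_1\le1$ — one then needs $\lambda_1<1$ strictly, which holds because a strict inequality $\lambda_1^\delta<\lambda_1^D<1$ is available from \eqref{fkpp: eq45} (with $\lambda_1^D$ the Dirichlet eigenvalue of $-k\Delta$ on $\Omega_1$, assumed $<1$ in the regime where a positive steady state is under discussion) and passes to the limit; alternatively, test \eqref{fkpp: ps1} with $e_1^\delta$ and use $\int_{\Omega_2}U^2\to0$ (shown as in \eqref{fkpp: eq46}) to see $\int_{\Omega_1}V^2>0$ directly. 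With $V\not\equiv0$ established, the uniqueness of the positive solution of \eqref{fkpp: ps2} under any EBC in Table~\ref{fkpp: tb} — provable by the sub/supersolution argument of Theorem~\ref{Uni} together with the sliding/monotonicity method for KPP nonlinearities, or cited from \cite{LLW2022} — forces $V$ to equal the unique positive solution, and, the limit being independent of the subsequence, the full family converges: $U\to V$ in $L^2(\Omega_1)$.

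The main obstacle I anticipate is not the limit passage (which is a near-verbatim reprise of Section~\ref{sec: EBCs}) but the strict inequality $\lambda_1<1$ and the nonvanishing of $V$ in the borderline regimes — particularly the cases where the EBC is $v=0$ (third column of Table~\ref{fkpp: tb}), where $\lambda_1$ is the Dirichlet eigenvalue and one must be careful that the hypothesis "$U$ exists" is compatible with $\lambda_1^\delta\to\lambda_1^D$; and the degenerate-looking cases with $\nabla_\Gamma v=0$, where the effective eigenvalue problem has a one-dimensional "constant-on-$\Gamma$" structure and the positivity of $V$ must be checked against that constraint. Carefully bookkeeping which normalization ($\sigma/\delta$ versus $\sqrt{\sigma\mu}$) to divide by in each of the nine regimes, and confirming that $\int_{\Omega_2}U^2\to0$ in every regime (not just $\sigma/\delta\to\alpha>0$), is the remaining technical point.
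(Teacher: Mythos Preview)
Your compactness and limit-passage steps match the paper's proof: the uniform bound $0\le U\le 1$, the energy estimate, the harmonic extension of the test function, and the $I+II+III$ splitting all carry over verbatim from Section~\ref{sec: EBCs}, and the paper says exactly this.

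The gap is in your nonvanishing argument. Knowing $\lambda_1<1$ only tells you that the limit problem \eqref{fkpp: ps2} \emph{possesses} a positive solution $V$; it does not tell you that the $L^2(\Omega_1)$-limit $U_0$ of $U$ equals $V$ rather than $0$. Both $0$ and $V$ are nonnegative weak solutions of \eqref{fkpp: ps2} with the EBC, and your compactness argument delivers only ``$U_0$ is one of them.'' Your chain $\lambda_1^\delta<\lambda_1^D<1\Rightarrow\lambda_1<1$ addresses existence of $V$, not positivity of $U_0$. The ``alternative'' of testing \eqref{fkpp: ps1} with $e_1^\delta$ yields $(1-\lambda_1^\delta)\int_\Omega U e_1^\delta=\int_\Omega U^2 e_1^\delta$, which is symmetric in the unknown and does not produce a $\delta$-uniform lower bound on $\|U\|_{L^2(\Omega_1)}$ without further input.

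The paper closes this gap by a genuine case split. When $\sigma/\delta\to 0$, Lemma~\ref{fkpp: le41} gives $\lambda_1^\delta\le C\sigma/\delta\to 0$, so for a fixed small $\varepsilon>0$ the function $\underline U=\varepsilon e_1^\delta$ is a subsolution of \eqref{fkpp: ps1} for all small $\delta$; hence $U\ge\varepsilon e_1^\delta$, and since $e_1^\delta\to|\Omega_1|^{-1/2}$ in $L^2(\Omega_1)$ (again Lemma~\ref{fkpp: le41}), the limit $U_0$ is strictly positive. When $\sigma/\delta\to\alpha\in(0,\infty]$, one argues by contradiction: if $U_0\equiv 0$, set $W=U/\|U\|_{L^2(\Omega)}$, which solves $-\nabla\cdot(A\nabla W)=W(1-U)$; the same energy bound and the estimate $\int_{\Omega_2}W^2\le C\delta^2/\sigma\to 0$ (as in \eqref{fkpp: eq46}) give $W\to W_0$ with $\|W_0\|_{L^2(\Omega_1)}=1$ solving $-k\Delta W_0=W_0$ with the EBC, forcing $\lambda_1=1$ and contradicting the existence of $V$. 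Your ``remaining technical point'' about whether $\int_{\Omega_2}U^2\to 0$ in every regime is not a bookkeeping detail but actually fails when $\sigma/\delta\to 0$ (take $\sigma=\delta^2$: then $\delta^2/\sigma=1$), which is precisely why the paper treats the two cases by different mechanisms.
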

\begin{remark}\label{fkpp: rk}
As the results about $U$, it holds that the positive steady state $V$ exists if and only if the principal eigenvalue $\lambda_1 < 1$; if $V$ exists, then it is unique; as $t \to \infty$, $v \to V$ if $V$ exists ($v \to 0$ if $V$ does not exist). Moreover, if $\alpha = 0$, then $V \equiv 1$.
\end{remark}

\begin{proof}[Proof of Theorem \ref{fkpp: prop}]
By performing the process similar to the parabolic case in the proof of Theorem \ref{fkpp: thm1}, we can show that after passing to a subsequence of $\delta \to 0$,
$$
U \to  \; \text{ some } \; U_0 \; \text{ in } \; L^2(\Omega_1),
$$
where $U_0$ is a weak solution of \eqref{fkpp: ps2} ($U_0$ may be 0 in $\Omega_1$). It is easy to see that the weak solution of \eqref{fkpp: ps2} is unique. Thus, all above convergences hold without passing to a subsequence. We next will show that $U_0 \not\equiv 0$ to ensure that $U_0$ is positive, namely, $U_0>0$ in $\Omega_1$. Thus, we have $U_0 = V$.

In what follows, it suffices to prove that $U_0 \not\equiv 0$ due to the fact $U \ge 0$. In the case of $ \frac{\sigma}{\delta} \to 0$ as $\delta \to 0$, we can construct a lower solution by defining $$\underline{U}(x) = \varepsilon e_1^\delta (x)
$$ 
where $\varepsilon > 0$ is sufficiently small. For small $\delta$, it holds from Lemma \ref{fkpp: le41} that 
$$  -\nabla \cdot (A(x)\nabla \underline{U}) - \underline{U}(1 - \underline{U}) = \underline{U}(\lambda_1^\delta - 1 + \varepsilon e_1^\delta) \le 0.
$$
Since $U \equiv 1$ is a upper solution, it follows from the lower and upper solutions method that 
$$U \ge \underline{U} = \varepsilon e_1^\delta,
$$
from which we have $U_0 \ge \underline{U} > 0$ in $\Omega_1$.

\smallskip
On the other hand, in the case of $ \frac{\sigma}{\delta} \to \alpha \in (0, \infty]$ as $\delta \to 0$, we will prove $U_0 > 0$ by contradiction. If not, then $U_0 \equiv 0$. Thus, let $W = \frac{U}{||U||_{L^2(\Omega)}} > 0$, and by \eqref{fkpp: ps1}, $W$ satisfies the following elliptic problem
\begin{equation*} 
    \left\{
             \begin{array}{llr}
             -\nabla \cdot (A(x)\nabla W) = W(1 - U) , &\mbox{$x \in \Omega,$}\\
             W=0,     &\mbox{$ x \in \partial \Omega.$}& \\
             \end{array}
  \right.
\end{equation*}
Multiplying both sides by $W$ and performing integration by parts, we have
\begin{equation} 
  k \int_{\Omega_1} |\nabla W|^2 dx + \int_{\Omega_2} A(x)\nabla W \cdot \nabla W dx + \int_{\Omega}W^2 (U - 1) dx = 0,
\end{equation}
from which $\{W\}_{\delta > 0 }$ is bounded in $H^1_0(\Omega_1)$. As a result, after passing to a subsequence of $\delta$, $W \to $ some $W_0$ weakly in $H^1_0(\Omega_1)$ and strongly in $L^2(\Omega_1)$. Moreover, similar to \eqref{fkpp: eq46}, we get
\begin{equation}
 \begin{split}
    \int_{\Omega_2} W^2 dx = &\int_{\Gamma}\int_0^\delta W^2 (1+H(s)r+\kappa(s)r^2)drds\\
    \le & C(\delta^2) \int_{\Gamma} \int_0^\delta W^2_r drds\\
    = & O(\frac{\delta^2}{\sigma}) \to 0 \; \text{ as } \; \delta \to 0.
\end{split}
\end{equation}
Consequently, we have $|| W_0 ||_{L^2(\Omega_1)} = 1$ because of the fact $|| W ||_{L^2(\Omega)} = 1$. Following the process as in the parabolic case in the proof of Theorem \ref{fkpp: thm1}, we can show that $W_0$ is a weak solution of
\begin{equation*} 
-k \Delta W_0 = W_0, \quad  x \in \Omega_1,
\end{equation*}
subject to the corresponding effective boundary conditions in Table \ref{fkpp: tb} (with $v$ replaced by $W_0$). This means that $\lambda_1 = 1$, which contradicts the necessary condition for the existence of $V$.

\end{proof}

\subsection{Main proof of Theorem \ref{fkpp: thm2} }
In the remaining part, our primary objective is to establish the proof of Theorem \ref{fkpp: thm2}, which unveils the infinite lifespan of each EBC as in Table \ref{fkpp: tb}.

\begin{proof}[Proof of Theorem \ref{fkpp: thm2}]
If the initial value $u_0  \equiv 0$, then it is easy to see that $u = v = 0$ in $\Omega_1 \times (0, \infty)$, which implies this theorem trivially. Thus, we always have the assumption of $u_0 \not\equiv 0$.

\smallskip
\noindent
\textbf{Case 1.} $\frac{\sigma}{\delta} \to 0$ as $\delta \to 0$, i.e., $\alpha = 0$. 

According to Theorem \ref{fkpp: thm3}, as $\delta \to 0$, it holds that $\lambda_1^\delta \to \lambda_1$, where $\lambda_1$ is the principal eigenvalue of \eqref{fkpp: ep2} with the corresponding effective boundary conditions (second column in Table \ref{fkpp: tb}). By Lemma \ref{fkpp: le41}, it turns out that $\lambda_1 = 0$. Thus, for a sufficiently small $\delta > 0$, it is easy to see that $\lambda_1^\delta < 1$, implying that \eqref{fkpp: ps2} has a unique positive steady state solution $U$. Moreover, it follows from Theorem \ref{fkpp: prop} and Remark \ref{fkpp: rk} that as $\delta \to 0$, $U \to V$ in $L^2(\Omega_1)$ with $V \equiv 1$.

Subsequently, we consider the eigenvalue problem of
\begin{equation} 
    \left\{
             \begin{array}{llr}
             -\nabla \cdot (A(x)\nabla u) - u(1 - U) = \eta u, &\mbox{$x \in \Omega,$}\\
             u=0,     &\mbox{$ x \in \partial \Omega,$}& \\
             \end{array}
  \right.
\end{equation}
from which $(\eta_1, u) = (0, U)$ is the principal eigenpair with $U > 0$ in $\Omega$. Let $\widetilde{u} = u - U$ and then $\widetilde{u}$ satisfies
\begin{equation} \label{fkpp: eq416}
    \left\{
             \begin{array}{llr}
             \widetilde{u}_t-\nabla \cdot (A(x)\nabla \widetilde{u}) = \widetilde{u}(1 -u- U), &\mbox{$(x, t) \in \Omega \times (0, \infty),$}\\
            \widetilde{u} = u_0 - U,     &\mbox{$ (x, t) \in \partial \Omega \times (0, \infty).$}& \\
             \end{array}
  \right.
\end{equation}
Multiplying \eqref{fkpp: eq416} by $\widetilde{u}$ and integrating by parts in $x$, we have
\begin{equation} \label{fkpp: eq417}
   \begin{split}
        \frac{1}{2} \frac{d}{dt}\int_{\Omega}\widetilde{u}^2dx =&-\int_{\Omega} \nabla \widetilde{u} \cdot A \nabla \widetilde{u} + \widetilde{u}^2(1- U)dx - \int_{\Omega}\widetilde{u}^2 u  dx\\
        \le & - \int_{\Omega} \widetilde{u}^2 u dx\\
        \le& 0,
   \end{split}
\end{equation}
which results from Lemma \ref{fkpp: es} and the variational characterization of $\eta_1$ as 
\begin{equation}
    0 = \eta_1 =  \underset{0 \not\equiv u \in H^1_0(\Omega)}{\inf} \frac{\int_\Omega A \nabla u \cdot \nabla u + u^2(U-1)dx}{\int_\Omega u^2 dx}.
\end{equation}
\end{proof}
Consequently, given a fixed $T > 0$, and for any $t \ge T$, \eqref{fkpp: eq417} gives
\begin{equation} \label{fkpp: eq419}
   \begin{split}
        ||u(\cdot, t) - U||_{L^2(\Omega_1)}
        \le &||u(\cdot, t) - U||_{L^2(\Omega)}\\
        \le& ||u(\cdot, T) - U||_{L^2(\Omega)} \\
        \le &||u(\cdot, T) - v(\cdot, T)||_{L^2(\Omega_1)} +||v(\cdot, T) - V||_{L^2(\Omega_1)} \\
        &+ ||U - V||_{L^2(\Omega_1)}
        +||u(\cdot, T) - U||_{L^2(\Omega_2)}.
   \end{split}
\end{equation}
Since $\lambda_1 = 0$ and $u_0 \ge 0, \not\equiv 0$, as $ t \to \infty$, it holds that
$$ v(x,t) \to V \equiv 1 \;  \text{ uniformly in } \; \overline{\Omega}_1,
$$
which means that for any $\varepsilon > 0$, there exists some large $T_\varepsilon > 0$ such that 
$$ |v(x,t) - V(x)| < \varepsilon, \quad  \forall x\in \overline{\Omega}_1,
$$
provided $t \ge T_\varepsilon$. Thus, we have 
$$
||v(\cdot, t) - V||_{L^2(\Omega_1)} \le |\Omega_1|^{1/2} \varepsilon
$$ 
for $t \ge T_{\varepsilon}$. Furthermore, for a sufficiently small $\delta > 0$, thanks to Theorem \ref{fkpp: thm1} and Lemma \ref{fkpp: es}, we are led to
\begin{equation} 
   \begin{split}
      ||u(\cdot, T_\varepsilon) - v(\cdot, T_\varepsilon)||_{L^2(\Omega_1)} \le  \varepsilon,
   \end{split}
\end{equation}
and
\begin{equation} \label{fkpp: eq421}
   \begin{split}
      ||u(\cdot, T_\varepsilon) - U||_{L^2(\Omega_2)} \le   \varepsilon.
   \end{split}
\end{equation}
As a result, for any $t\ge T_\varepsilon$, due to Theorem \ref{fkpp: prop} and above estimates, \eqref{fkpp: eq419} gives
\begin{equation} 
   \begin{split}
        ||u(\cdot, t) - U||_{L^2(\Omega_1)} 
        \le &||u(\cdot, T_\varepsilon) - v(\cdot, T_\varepsilon)||_{L^2(\Omega_1)} +||v(\cdot, T_\varepsilon) - V||_{L^2(\Omega_1)} \\
        &+ ||U - V||_{L^2(\Omega_1)}
        +||u(\cdot, T) - U||_{L^2(\Omega_2)}\\
        \le & C\varepsilon.
   \end{split}
\end{equation}
Therefore, in view of \eqref{fkpp: eq419}-\eqref{fkpp: eq421}, we have
\begin{equation} 
   \begin{split}
        &\underset{t\in[T_\varepsilon, \infty]}{\max}||u(\cdot, t) - v(\cdot, t) ||_{L^2(\Omega_1)}\\ \le&\underset{t\in[T_\varepsilon, \infty]}{\max}||u(\cdot, t) - U ||_{L^2(\Omega_1)} +  \underset{t\in[T_\varepsilon, \infty]}{\max}||v(\cdot, t) - V ||_{L^2(\Omega_1)} + ||U - V ||_{L^2(\Omega_1)} \\
        \le&||u(\cdot, T_\varepsilon) - U ||_{L^2(\Omega_1)} +  \underset{t\in[T_\varepsilon, \infty]}{\max}||v(\cdot, t) - V ||_{L^2(\Omega_1)} + ||U - V ||_{L^2(\Omega_1)} \\
        \le & C \varepsilon.\\
   \end{split}
\end{equation}
Thus, for a small $\delta > 0$, it holds that
\begin{equation}\label{fkpp: eq424}
    \begin{split}
        \underset{0 \le t \le \infty}{\max}||u(\cdot, t) - v(\cdot, t) ||_{L^2(\Omega_1)} &\le \underset{t\in[0, T_\varepsilon]}{\max}||u(\cdot, t) - v(\cdot, t) ||_{L^2(\Omega_1)}+ \underset{t\in[T_\varepsilon, \infty]}{\max}||u(\cdot, t) - v(\cdot, t) ||_{L^2(\Omega_1)}\\
        & \le C \varepsilon,
    \end{split}
\end{equation}
which completes the proof of the first case.

\medskip 
\noindent
\textbf{Case 2.} $\frac{\sigma}{\delta} \to \alpha \in (0, \infty)$ as $\delta \to 0$. 

Thanks to Theorem \ref{fkpp: thm3} and Lemma \ref{fkpp: le42}, we have $\lambda_1> 0$, where $\lambda_1$ is the principal eigenvalue of \eqref{fkpp: ep2} with the corresponding boundary conditions (third column in Table \ref{fkpp: tb}). 

If $\lambda_1 < 1$, then it holds that $\lambda_1^\delta < 1$ for small $\delta > 0$, from which both $U$ and $V$ exist. Similar to the case as in the proof of Case 1, we complete the proof of this theorem by deriving \eqref{fkpp: eq424}.

If $\lambda_1 > 1$, then it holds that $\lambda_1^\delta > 1$ for small $\delta > 0$, from which neither $U$ nor $V$ exists, namely, $U= V =0$. For small $\delta > 0$, notice
\begin{equation} \label{fkpp: eq425}
   \begin{split}
        \frac{1}{2} \frac{d}{dt}\int_{\Omega}u^2dx =&-\int_{\Omega} \nabla u \cdot A \nabla u + u^2 dx - \int_{\Omega}u^3  dx\\
        \le & (-\lambda_1^\delta + 1) \int_{\Omega} u^2 dx\\
        \le & 0,
   \end{split}
\end{equation}
where we have used \eqref{fkpp: eq42} and $u \ge 0$. Thus, similar to the case as in the proof of Case 1, we get \eqref{fkpp: eq424} that finishes this proof.

\smallskip

If $\lambda_1 = 1$, then we have two possible cases for the behavior of $\lambda_1^\delta$. Suppose after passing to a subsequence of $\delta > 0$, we have $\lambda_1^\delta < 1$ and $\lambda_1^\delta \to \lambda_1$, in which case, $U$ exists along this subsequence but $V$ does not (i.e., $V\equiv 0$). Thus, the proof can be accomplished by the same argument as in Case 1.

On the other hand,  there exists a subsequence of $\delta$, along which we have $\lambda_1^\delta \ge 1$. As a result, neither $U$ nor $V$ exist along this subsequence, namely, $U= V =0$. Moreover, direct computation yields \eqref{fkpp: eq425}, and similar to the case as in the proof of Case 1, we prove this theorem by obtaining \eqref{fkpp: eq424}.
   
\medskip 
\noindent
\textbf{Case 3.} $\frac{\sigma}{\delta} \to \infty$ as $\delta \to 0$. 

If $\lambda_1^D < 1$, it follows from Theorem \ref{fkpp: thm3} that $\lambda_1^\delta < 1$ for small $\delta > 0$, resulting in the existence of $U$ and $V$. By using the method analogous to Case 1, the proof
can be finished.

If $\lambda_1^D > 1$, we have $\lambda_1^\delta > 1$ for small $\delta > 0$, from which $U = V \equiv 0$. By \eqref{fkpp: eq425}, we can prove this theorem by performing the same argument as in Case 1.

If $\lambda_1^D = 1$, we have $\lambda_1^\delta < 1$ for small $\delta > 0$, leading to $U = V \equiv 0$. The proof is same as in Case 1.

\smallskip

Therefore, we complete the whole proof of this theorem.

\end{document}